\documentclass[12pt]{amsart}
\setlength{\textwidth}{6.5in}
\setlength{\textheight}{9.25in}
\setlength{\oddsidemargin}{0in}
\setlength{\evensidemargin}{0in}
\setlength{\topmargin}{-.3in}

\usepackage{tikz}

\usepackage[pdftex]{hyperref}
\hypersetup{
colorlinks=true,
linkcolor=blue,
urlcolor=blue,
citecolor=blue
}

\usepackage[mathlines]{lineno}

\usepackage{color}

\usepackage{amsmath}
\usepackage{amssymb}
\usepackage{amsthm}
\usepackage{graphicx}
\usepackage{multicol}

\usepackage{listings}



\usepackage{enumerate}
\usepackage{float}



\newcommand{\N}{\mathbb{N}}

\newcommand{\sig}{\sigma}

\newcommand{\eps}{\varepsilon}

\newcommand{\lam}{\lambda}

\newcommand{\R}{\mathbb{R}}

\newcommand{\Z}{\mathbb{Z}}





\DeclareMathOperator{\fix}{Fix}

\DeclareMathOperator{\aut}{Aut}

\DeclareMathOperator{\spn}{Span}

\DeclareMathOperator{\col}{Col}






\theoremstyle{plain} 
\newtheorem{thm}{Theorem}[section]

\newtheorem{prop}[thm]{Proposition}

\theoremstyle{definition}
\newtheorem{definition}[thm]{Definition}

\newtheorem{example}[thm]{Example}

\theoremstyle{remark}
\newtheorem{rem}[thm]{Remark}


\floatstyle{boxed}
\newfloat{Algorithm}{tbp}{lop} 
\floatname{Algorithm}{\sc{Algorithm}}

\begin{document}


\newpage 

\thanks{\today}

\title[BLIS]
{A Bifurcation Lemma for Invariant Subspaces}

\author{John M. Neuberger}
\author{N\'andor Sieben}
\author{James W. Swift}

\email{
John.Neuberger@nau.edu,
Nandor.Sieben@nau.edu,
Jim.Swift@nau.edu}

\address{
Department of Mathematics and Statistics,
Northern Arizona University PO Box 5717,
Flagstaff, AZ 86011-5717, USA
}

\subjclass[2020]{34A34, 34C23, 35J61, 37C79, 37C81}
\keywords{coupled networks, synchrony, bifurcation, invariant subspaces}

\begin{abstract}
The Bifurcation from a Simple Eigenvalue (BSE) Theorem
is the foundation of steady-state bifurcation theory
for one-parameter families of functions.
When eigenvalues of multiplicity greater than one are caused by symmetry, the Equivariant Branching Lemma (EBL)
can often be applied to 
predict
the branching of solutions.
The EBL can be interpreted as the application of the BSE Theorem to a fixed point subspace.
There are functions which have invariant linear subspaces that are not caused by symmetry.
For example, networks of identical coupled cells often have such invariant subspaces.
We present a generalization of 
the EBL, where the BSE Theorem is applied to nested invariant subspaces. We call this the Bifurcation Lemma for Invariant Subspaces (BLIS). 
We give several examples of bifurcations and determine if BSE, EBL, or BLIS apply.
We extend our previous automated bifurcation analysis algorithms to use the BLIS to simplify and 
improve the detection of branches created at bifurcations.
\end{abstract}

\maketitle


\section{Introduction}

We present a Bifurcation Lemma for Invariant Subspaces (BLIS).
The BLIS proves the existence of bifurcating solution branches of a nonlinear equation $F(s,x)=0$ with a single parameter~$s$.
It applies the Bifurcation from a Simple Eigenvalue (BSE) Theorem of 
Crandall and Rabinowitz \cite{CR} to nested invariant subspaces.
The BSE Theorem is a fundamental result well known to researchers in partial differential equations (PDE).
The equally fundamental Equivariant Branching Lemma (EBL) \cite{ cicogna, GSvol1, vander}
is a powerful tool for the study of bifurcations in symmetric dynamical systems.
While symmetry in systems leads to invariant fixed point subspaces \cite{GSS},
the structure of some systems, especially coupled networks, causes additional invariant subspaces  \cite{ GSbook2023, NSS3, NSS5, NSS7, NiSS}.
In \cite{NSS3}, we call these anomalous invariant subspaces.
The BLIS can be thought of as extending the EBL
from fixed point subspaces of symmetric systems to all invariant subspaces.
The BLIS is a generalization of the Synchrony Branching Lemma, which describes some bifurcations from the fully synchronous state in coupled oscillators. See \cite{FranciGolubitskyStewart, GL_SBL, soares} and \cite[Section 18.3]{GSbook2023}.

The BLIS predicts the branching of solutions in a wide variety of applications, 
including all branching predicted by the EBL.
The BLIS lends itself to implementation as a numerical algorithm,
and provides a bridge between bifurcation theory in PDE and dynamical systems.
Figure~\ref{VennDiagram} shows the relationship between BLIS, BSE, and EBL, and references our example applications. 

Some invariant subspaces are caused by the symmetry of a dynamical system.
The topology of a network of coupled cells can introduce additional invariant subspaces.
In general, there can be even more invariant subspaces.
Our current paper does not discuss the reason for, or computation of, these invariant subspaces.
A search for a more general theory of invariant subspaces can be found in \cite{Schneider2022}.
This work
includes the Equivaroid Branching Lemma \cite[Theorem 4.2.2]{Schneider2022}, which is similar to our BLIS.

In Section \ref{sec:MainThm} we give some definitions and introduce notation, and then state and prove the BLIS and some related propositions.
In Section \ref{sec:Examples} we present numerous examples of applications of the BLIS.
We consider one-dimensional systems, coupled cell networks, and PDE.
Lastly, in Section~\ref{sec:Implementation} we give some details concerning the 
algorithms and numerical implementation we used to 
generate the bifurcation diagrams in the coupled network examples.
Algorithms for finding invariant subspaces for small networks are developed in~\cite{Aguiar&Dias, KameiLattice, NSS7, NiSS},
with implementations found at~\cite{invariantWEB, NiSS_companion}.
Our code for branch following, which is freely available at~\cite{NSS8_companion},  uses the BLIS.  It improves
the Newton method implementations
 in~\cite{bifGraph, NSS3, NSS5} which use only the fixed point subspaces of the group action. %
%
%
%
%
%
%
%

\begin{figure}
\begin{tikzpicture}
\draw (2.9,2) ellipse (1.8cm and 1cm) node[fill=white,above=.75cm] {\tiny EBL}; 
\draw[rounded corners] (0,0.5)  rectangle node[above=1.25cm,fill=white] {\tiny BLIS} (6.25,3.5);
\draw (1,2) ellipse (2cm and 1.2cm) node[fill=white,above=.75cm,left=1.25cm] {\tiny BSE};
\node at (-0.8,2) [anchor = west] {\tiny\ref{trans2InR}\vphantom{(c)}};
\node at (0.1,2.75) [anchor = west] {\tiny\ref{transInR}\vphantom{(c)}};
\node at (0.1,2.25) [anchor = west] {\tiny\ref{W5toW8}\vphantom{(c)}};
\node at (0.1,1.75) [anchor = west] {\tiny\ref{transInPDE}\vphantom{(c)}};
\node at (0.1,1.25) [anchor = west] {\tiny\ref{PDEsecondary}\vphantom{(c)}};
\node at (1.6,2.5) [anchor = west]{\tiny\ref{pitchforkInR}\vphantom{(c)}};
\node at (1.6,2.0) [anchor = west] {\tiny\ref{W1toW5andW6}(b)};
\node at (1.6,1.5)  [anchor = west] {\tiny\ref{pitchforkInPDE}\vphantom{(c)}};
\node at (3.1,2.5) [anchor = west] {\tiny\ref{lam12square}\vphantom{(c)}};
\node at (3.1,2.0) [anchor = west] {\tiny\ref{exa:lambda13}(a)};
\node at (3.1,1.5)  [anchor = west] {\tiny\ref{cube6D}(a)};
\node at (4.9,2.75) [anchor = west] {\tiny\ref{W1toW5andW6}(a)};
\node at (4.9,2.25)  [anchor = west] {\tiny\ref{W0s4}\vphantom{(c)}};
\node at (4.9,1.75) [anchor = west] {\tiny\ref{exa:lambda13}(b)};
\node at (4.9,1.25) [anchor = west] {\tiny\ref{cube6D}(b)};
\node at (6.5,2.5) [anchor = west] {\tiny\ref{foldInR}\vphantom{(c)}};
\node at (6.5,2.0) [anchor = west] {\tiny\ref{nongenericInR}\vphantom{(c)}};
\node at (6.5,1.5) [anchor = west] {\tiny\ref{cube6D}(c)};
\end{tikzpicture}
\caption{
\label{VennDiagram}
The relationship between the Bifurcation Lemma for Invariant Subspaces, Bifurcation from a Simple Eigenvalue,  and the Equivariant Bifurcation Lemma.  The numbers indicate examples.
}
\end{figure}
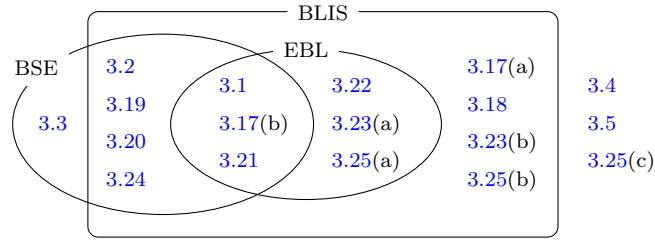

\section{The Main Results}
\label{sec:MainThm}
In this section, we give some background and state the BSE Theorem of Crandall and Rabinowitz \cite{CR}.  We then use the BSE Theorem to prove the BLIS.
\begin{definition}
\label{def:F-invariantXtoX}
Let $F: I \times X\to X$, where $I$ is an open interval and $X$ is a Banach space. A subspace $W$ of $X$ is \emph{$F$-invariant} if $F(s,W) \subseteq W$ for all $s\in I$.
\end{definition}

This extends the standard definition that $W$ is an $F$-invariant subspace for $F: X \to X$ provided $F(W) \subseteq W$.

\begin{example}
The trivial subspace $\{0\} \subseteq X$ is  $F$-invariant if and only if $F(s, 0) = 0$ for all $s \in I$.
Also, the full subspace $X$ is $F$-invariant
for any $F: I \times X \to X$.
\end{example}

We often get $F$-invariant subspaces when there is a symmetry in the system.  We recall
some definitions involving group actions as they relate to invariant subspaces.
Assume a compact Lie group $\Gamma$ acts linearly on a Banach space $X$
by the function $(\gamma,x)\mapsto \gamma x:\Gamma\times X\to X$.
We say that $F: I \times X \to X$ is $\Gamma$-\emph{equivariant} if
$F(s, \gamma x) = \gamma F(s, x)$ for all $(s, x) \in I \times X$, $\gamma \in \Gamma$.
The \emph{stabilizer of}  $x$, or the \emph{isotropy subgroup of} $\Gamma$ \emph{with respect to}
$x \in X$, is $\Gamma_x := \{ \gamma  \in \Gamma: \gamma x = x\}$.
An \emph{isotropy subgroup of} $\Gamma$ is $\Sigma = \Gamma_x$ for some $x$.
A subgroup of $\Gamma$ is not necessarily an isotropy subgroup of the group action.
For every subgroup $\Sigma$ of $\Gamma$, the \emph{fixed point subspace} of $\Sigma$ is
$$\fix(\Sigma) := \{x \in X : \sigma x = x \text { for all } \sigma \in \Sigma \}.$$ 
The \emph{point stabilizer} of a set $U \subseteq X$ is 
$$\text{pStab}(U) := \{ \gamma \in \Gamma : \gamma x = x \mbox{ for all } x \in U\}.$$
The group action on $X$ extends to an action on subsets of $X$.  If $W$ is an $F$-invariant subspace, then
$\gamma W$ is also an $F$-invariant subspace for all $\gamma \in \Gamma$.
The $\Gamma$-\emph{orbit} of an $F$-invariant subspace $W$ is
$\{ \gamma W : \gamma \in \Gamma\}$.

\begin{prop}
Given a subgroup $\Sigma$ of a compact Lie group $\Gamma$ that acts linearly on a Banach space,
$\fix(\Sigma)$ is an $F$-invariant subspace if $F$ is $\Gamma$-equivariant.
\end{prop}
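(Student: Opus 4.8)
The plan is to verify directly from the definitions that $\fix(\Sigma)$ meets the two requirements of Definition~\ref{def:F-invariantXtoX}: that it is a subspace of $X$, and that $F(s, \fix(\Sigma)) \subseteq \fix(\Sigma)$ for every $s \in I$. First I would confirm that $\fix(\Sigma)$ is a linear subspace. Because the action of $\Gamma$ is linear, each $\sigma \in \Sigma$ acts as a linear operator, so the defining condition $\sigma x = x$ survives linear combinations: if $\sigma x = x$ and $\sigma y = y$ hold for all $\sigma \in \Sigma$, then $\sigma(a x + b y) = a\,\sigma x + b\,\sigma y = a x + b y$, and of course $\sigma 0 = 0$. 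Hence $\fix(\Sigma)$ is closed under addition and scalar multiplication and contains the origin.

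The substantive content is the $F$-invariance, and it collapses to a single application of equivariance. Fix $s \in I$ and $x \in \fix(\Sigma)$, and let $\sigma \in \Sigma$ be arbitrary. Since $x \in \fix(\Sigma)$ we have $\sigma x = x$, so equivariance of $F$ gives
\[
\sigma F(s, x) = F(s, \sigma x) = F(s, x).
\]
As $\sigma$ was an arbitrary element of $\Sigma$, the point $F(s,x)$ is fixed by all of $\Sigma$, i.e. $F(s,x) \in \fix(\Sigma)$. This holds for every $x \in \fix(\Sigma)$ and every $s \in I$, which is precisely the statement that $\fix(\Sigma)$ is $F$-invariant.

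I do not expect a genuine obstacle: the argument is purely algebraic, and neither the compactness of $\Gamma$ nor the completeness of $X$ is actually invoked — all that is used is that $\Sigma$ acts by linear maps and that $F$ intertwines that action. The only point meriting a moment's care is the subspace claim itself, since Definition~\ref{def:F-invariantXtoX} demands that $W$ be a subspace rather than merely a subset; this is exactly where linearity of the action enters. If one additionally wants $\fix(\Sigma)$ to be \emph{closed} (as is natural in the Banach space setting), that follows immediately from continuity of the action, writing $\fix(\Sigma) = \bigcap_{\sigma \in \Sigma} \ker(\sigma - I)$ as an intersection of kernels of continuous linear operators.
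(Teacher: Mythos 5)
Your proof is correct and follows essentially the same route as the paper's: both establish that $\fix(\Sigma)$ is a subspace via linearity of the action, then apply equivariance once to get $\sigma F(s,x) = F(s,\sigma x) = F(s,x)$ for all $\sigma \in \Sigma$. Your additional remarks (that compactness and completeness are never used, and that $\fix(\Sigma)$ is closed) are accurate but not needed for the statement.
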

\begin{proof}
First of all, $\fix(\Sigma)$ is a subspace of $X$ because $\Gamma$ acts linearly.
Assume $x \in \fix(\Sigma)$.  Then $F(s, x) = F(s,\sigma x) = \sigma F(s, x)$ for all $\sigma \in \Sigma$.  
Thus $F(s, x) \in \fix(\Sigma)$.
\end{proof}

\begin{example}
\label{exa:secondDerivative}
Let $F: C^\infty \to C^\infty$ be defined by $F(u) = u'' $, where $C^\infty$ is the space of infinitely differentiable 
real-valued functions on $\R$.
The group $\Z_2 = \{1, \gamma \}$ acts on $C^\infty$ by $(\gamma u)(t) = u(-t)$.  
Note that $F$ is $\Z_2$-equivariant because $F(\gamma u)(t)=u''(-t)=(\gamma F(u))(t)$.
The $F$-invariant fixed point subspace $\fix(\Z_2)$ is the subspace of even functions.
\end{example}

Recall that a \emph{simple $C^1$-curve} is the image of an injective $C^1$-function $p:(a,b)\to I\times X$ with non-vanishing derivative.
Note that a 
simple $C^1$-curve
does not cross itself, and it has no cusps or corners.

\begin{definition}
\label{def:branch}
Let $F: I \times X \to X$ be as in Definition~\ref{def:F-invariantXtoX}.
A \emph{solution branch of $F$}, or simply a \emph{branch}, is a simple $C^1$-curve  that is a subset of $F^{-1}(\{0\})$.
A \emph{$W$-branch} is a solution branch contained in $I \times W$, where $W$ is an $F$-invariant subspace of $X$. 
\end{definition}

\begin{example}
The function $F: \R \times \R \to \R$ defined by $F(s, x) = sx$ has two solution branches, with equations $s = 0$ and $x = 0$.  
The L-shaped curve $\{(s, 0) \mid s \geq 0\} \cup \{(0, x) \mid x \geq 0\}$ is not a solution branch because it has a corner at
$(0,0)$.
\end{example}
\begin{definition}
\label{def:bifPt}
A point $p$ in a solution branch of $F$ is a \emph{bifurcation point} if every neighborhood of $p$
in $I \times X$
contains a zero of $F$ that is not in the solution branch.
\end{definition}

Typically, a bifurcation point is the intersection of two or more branches.
In the most recent example, 
$(0,0)$ is a bifurcation point.
Note that a fold point, sometimes called a saddle-node
bifurcation, is not a bifurcation point by this definition, which follows \cite{CR}.
This definition also ignores Hopf bifurcations.

The following proposition is a consequence of the Implicit Function Theorem, and it gives conditions for which a solution $(s^*, x^*)$ to $F(s, x) = 0$ can be extended to a unique solution branch within an invariant subspace.
We call this curve of solutions the mother branch because we anticipate that the hypotheses of 
Theorem~\ref{thm:BLIS} hold.

\begin{prop}  
\label{prop:motherExists}
Let $I$ be an open interval, $X$ be a Banach space, and $F:I\times X\to X$ be a $C^{2}$-function.
Suppose $W_m$ is an $F$-invariant subspace of $X$, $(s^*, x^*) \in I \times W_m$ satisfies 
$F(s^*, x^*) = 0$, and $D_2F(s^*, x^*) |_{W_m}$ is nonsingular.  
Then there exists a neighborhood $U$ of $(s^*, x^*)$ in $I\times X$, and a
$C^{2}$-function $b_m:I_{m}\to W_{m}$ defined on an open
interval containing $s^{*}$ such that $b_m(s^{*})=x^{*}$ and
$$
F^{-1}(\{0\})  \cap (I\times W_m) \cap U = C_m ,
$$  
where
\begin{equation}
\label{eqn:Bm}
C_m := \{(s, b_m(s) ) \mid s \in I_m \}.
\end{equation}
We call $b_m$ the mother branch function and its graph $C_m$ the mother branch.
\end{prop}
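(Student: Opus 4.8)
The plan is to apply the Implicit Function Theorem (IFT) to the restriction of $F$ to the invariant subspace $W_m$. Because $W_m$ is $F$-invariant, $F$ maps $I\times W_m$ into $W_m$, so I would define the restricted map $\tilde F := F|_{I\times W_m}\colon I\times W_m\to W_m$, which inherits the $C^2$ regularity of $F$. Here I use that $W_m$, as a closed subspace of $X$, is itself a Banach space; this completeness is implicit in the phrase ``$D_2F(s^*,x^*)|_{W_m}$ is nonsingular,'' since nonsingularity is to be read as bounded invertibility on $W_m$. The first substantive step is to identify $D_2\tilde F(s^*,x^*)$ with $D_2F(s^*,x^*)|_{W_m}$: for $v\in W_m$ the difference quotient $\tfrac1t\bigl(F(s^*,x^*+tv)-F(s^*,x^*)\bigr)$ lies in $W_m$ for every small $t$, and its limit $D_2F(s^*,x^*)v$ therefore remains in $W_m$ since $W_m$ is closed. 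Thus $D_2F(s^*,x^*)$ leaves $W_m$ invariant, and its restriction is exactly the second partial derivative of $\tilde F$.

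By hypothesis this operator $D_2\tilde F(s^*,x^*)=D_2F(s^*,x^*)|_{W_m}\colon W_m\to W_m$ is nonsingular, and $\tilde F(s^*,x^*)=F(s^*,x^*)=0$. The IFT applied to $\tilde F$ at $(s^*,x^*)$ then yields an open interval $I_m\ni s^*$, an open neighborhood $V$ of $x^*$ in $W_m$, and a unique $C^2$ function $b_m\colon I_m\to V\subseteq W_m$ with $b_m(s^*)=x^*$ such that, for $(s,x)\in I_m\times V$, one has $\tilde F(s,x)=0$ if and only if $x=b_m(s)$.

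The remaining work is bookkeeping to promote this local statement in $W_m$ to the claimed equality inside $I\times X$. I would choose an open set $\hat V\subseteq X$ with $\hat V\cap W_m=V$ and set $U:=I_m\times\hat V$, an open neighborhood of $(s^*,x^*)$ in $I\times X$. Then $U\cap(I\times W_m)=I_m\times V$, and on this set $F$ agrees with $\tilde F$, so the zero set reduces exactly to the graph of $b_m$:
$$F^{-1}(\{0\})\cap(I\times W_m)\cap U = \{(s,b_m(s)) : s\in I_m\} = C_m,$$
which is the asserted identity, with $C_m$ as defined in \eqref{eqn:Bm}; the last equality uses that $b_m$ maps $I_m$ into $V$. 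Finally, $C_m$ is a simple $C^1$-curve, being the image of the injective parametrization $s\mapsto(s,b_m(s))$ whose derivative $(1,b_m'(s))$ never vanishes, so it is a genuine $W_m$-branch, justifying the name mother branch.

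I do not expect a real obstacle here, only two points requiring care: establishing that $D_2F(s^*,x^*)$ preserves $W_m$ so that the restricted problem has the stated invertible derivative, and selecting $U$ so that $U\cap(I\times W_m)$ coincides with the IFT neighborhood $I_m\times V$. Both become routine once $W_m$ is treated as a Banach space in its own right, and indeed the hypotheses have been arranged precisely so that the classical IFT applies verbatim to $\tilde F$.
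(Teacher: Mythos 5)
Your proposal is correct and follows essentially the same route as the paper, which likewise proves the result by applying the Implicit Function Theorem to the restriction $F_m : I \times W_m \to W_m$ and notes that the only subtlety is arranging for $(I \times W_m) \cap U$ to be the IFT neighborhood in $I \times W_m$. Your additional care (closedness of $W_m$, showing $D_2F(s^*,x^*)$ preserves $W_m$, and the explicit choice of $\hat V$) simply fills in details the paper leaves implicit.
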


\begin{proof}
This is the Implicit Function Theorem applied to the restriction $F_m: I \times W_m \to W_m$ 
of $F$ to $I \times W_m$.
The only subtlety is that $(I\times W_m)\cap U$ is a neighborhood of $(s^*, x^*)$ in $I \times W_m$.
\end{proof}

\begin{rem}
If $W_m = \{  0 \}$ is an invariant subspace, then $D_2 F(s^*, x^*)|_{W_m}$ is vacuously nonsingular.  Here the mother branch function
is $b_m(s) = 0$ for all $s \in I$, and the mother branch is called the \emph{trivial branch}.
\end{rem}

The BLIS describes a bifurcation of the mother branch of
Proposition~\ref{prop:motherExists} at $(s^*, x^*)$ to a daughter branch.
This is a bifurcation from a simple eigenvalue for the restricted function
$F_d: I \times W_d \to  W_d$, where $W_m \subsetneq W_d$ are the invariant subspaces corresponding to the mother and daughter branches, respectively.
To this end, we first give a slight modification of the BSE Theorem \cite[Theorem 1.7]{CR}.
Motivated by Definition~\ref{def:F-invariantXtoX}, we assume $F: I_m \times X \to X$,
while \cite{CR} assumes $F: (-1,1) \times X \to Y$.
In our theorem, the bifurcation is at $(s^*, 0)$, whereas \cite{CR} assumes the bifurcation point is $(0,0)$.

\begin{thm}[BSE, Theorem 1.7 of \cite{CR}]
\label{thm:CR1.7}
Let $X$ be a Banach space and $I_m$ an 
open
interval containing $s^*$.  
Let $\tilde F: I_m \times X \to X$ be a $C^2$-function with these properties:

(a) $\tilde F(s, 0) = 0$ for $s \in I_m$.

(b) The partial derivatives $D_1 \tilde F$, $D_2 \tilde F$, $D_1 D_2 \tilde F$, and $D_2^2 \tilde F$ exist and are continuous.

(c) $N(D_2\tilde F(s^*, 0))$ is one-dimensional, and $R(D_2\tilde F(s^*, 0))$ has codimension 1.

(d) $D_1D_2 \tilde F(s^*, 0) (x_0) \not \in R(D_2\tilde F(s^*, 0))$, 
where $N(D_2\tilde F(s^*, 0))= \spn(\{x_0\}).$

\noindent
If $Z$ is any complement of $\spn(\{x_0\})$ in $X$, then there is a neighborhood $U$ of $(s^*, 0)$ in $\R \times X$, an interval $(-a,a)$,
and 
$C^1$-functions
$\varphi: (-a,a) \to \R$, $\psi:(-a,a) \to Z$ such that $\varphi(0) = s^*$, $\psi(0) = 0$ and
$$
\tilde F^{-1}(\{0\}) \cap U = \{ (\varphi(\alpha), \alpha x_0 + \alpha \psi(\alpha)) : |\alpha| < a\} \cup \{(s, 0) : (s, 0) \in U\}.
$$
\end{thm}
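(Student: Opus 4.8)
The plan is to deduce this from the original Crandall--Rabinowitz theorem \cite[Theorem 1.7]{CR} by an affine reparametrization of the parameter that moves the bifurcation point from $(s^*, 0)$ to $(0,0)$ and shrinks the parameter interval to $(-1,1)$; the specialization of the codomain from a general Banach space $Y$ to $X$ is immediate by taking $Y = X$. First I would fix a scaling constant: since $I_m$ is open and contains $s^*$, there is some $c > 0$ with $(s^* - c, s^* + c) \subseteq I_m$. Define $G: (-1,1) \times X \to X$ by $G(\tau, x) := \tilde F(s^* + c\tau, x)$. This is well defined because $s^* + c\tau \in I_m$ whenever $|\tau| < 1$, and $G$ is $C^2$ as the composition of $\tilde F$ with a smooth affine map.

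Next I would check that $G$ satisfies hypotheses (a)--(d) of \cite[Theorem 1.7]{CR} at the point $(0,0)$. Hypothesis (a) holds since $G(\tau, 0) = \tilde F(s^* + c\tau, 0) = 0$ by (a) for $\tilde F$. Hypothesis (b) follows from the chain rule, which preserves the existence and continuity of the four listed partials. For (c), the key observation is that $D_2 G(0,0) = D_2 \tilde F(s^*, 0)$, so the null space and range of this operator, and hence the one-dimensionality and codimension-one conditions, are inherited verbatim; in particular $N(D_2 G(0,0)) = \spn(\{x_0\})$.

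For (d) I would track the scaling through the chain rule: $D_1 G(\tau, x) = c\, D_1 \tilde F(s^* + c\tau, x)$, and therefore $D_1 D_2 G(0,0)(x_0) = c\, D_1 D_2 \tilde F(s^*, 0)(x_0)$. Since $R(D_2 G(0,0)) = R(D_2 \tilde F(s^*, 0))$ is a linear subspace and $c \neq 0$, the transversality condition (d) for $G$ holds if and only if it holds for $\tilde F$. I would then apply \cite[Theorem 1.7]{CR} to $G$ to obtain a neighborhood $U_G$ of $(0,0)$, an interval $(-a, a)$, and $C^1$-functions $\varphi_G: (-a,a) \to \R$ and $\psi_G: (-a,a) \to Z$ with $\varphi_G(0) = 0$ and $\psi_G(0) = 0$ describing $G^{-1}(\{0\}) \cap U_G$. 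Translating back via $\varphi(\alpha) := s^* + c\, \varphi_G(\alpha)$, $\psi := \psi_G$, and $U := \{(s^* + c\tau, x) : (\tau, x) \in U_G\}$, one reads off $\varphi(0) = s^*$, $\psi(0) = 0$, and the claimed description of $\tilde F^{-1}(\{0\}) \cap U$, using that $G(\tau, x) = \tilde F(s, x)$ under $s = s^* + c\tau$.

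The main obstacle here is not conceptual but bookkeeping: I expect the only points needing genuine care are tracking the scalar $c$ through condition (d)---confirming that this nonzero factor does not disturb the transversality, precisely because the range is a subspace---and the mild topological remark that a neighborhood of $(s^*, 0)$ in $I_m \times X$ is automatically a neighborhood in $\R \times X$ since $I_m$ is open, which is the analogue of the subtlety already noted in the proof of Proposition~\ref{prop:motherExists}.
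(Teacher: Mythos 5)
Your reduction is the same as the paper's: both arguments rescale the parameter by an affine map $s = s^* + c\tau$ (the paper writes $\varepsilon$ for your $c$), verify that hypotheses (a)--(d) transfer to the rescaled function, apply the original Crandall--Rabinowitz theorem at $(0,0)$, and translate back; your tracking of the nonzero factor $c$ through the transversality condition (d) and your observation that taking $Y = X$ handles the codomain are both correct.

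There is, however, one genuine gap, and it is exactly the point on which the paper's own argument spends the remainder of its effort: the regularity of $\varphi$ and $\psi$. As stated in \cite{CR}, Theorem 1.7 does \emph{not} assume that $F_{xx} = D_2^2 F$ is continuous, and correspondingly its conclusion yields only \emph{continuous} functions $\varphi$ and $\psi$. Your proof invokes the original theorem as if it directly produced $C^1$-functions $\varphi_G$ and $\psi_G$, which it does not. To obtain the $C^1$ conclusion claimed in the statement you must appeal to the remark following Theorem 1.7 in \cite{CR}: if in addition $F_{xx}$ exists and is continuous---which holds for your $G$, since hypothesis (b) of the present theorem requires continuity of $D_2^2 \tilde F$ and this is preserved under the affine reparametrization---then $\varphi$ and $\psi$ are $C^1$. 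Without citing that remark (or reproving the regularity upgrade), your argument establishes a strictly weaker conclusion than the one stated.
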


\begin{rem}
Theorem~\ref{thm:CR1.7} is not exactly as originally stated in \cite[Theorem 1.7]{CR}.  
Our statement is convenient for our purposes and is an easy consequence of the original result,
as now we explain.

First, we scale the parameter $s \in I_m$ to $t \in (-1,1)$.
Choose $\eps > 0$ such that 
$(s^*-\eps, s^* + \eps) \subseteq I_m$.
We apply \cite[Theorem 1.7]{CR} to
$F:(-1,1)\times X \to X$  defined by $F(t, x) = \tilde F( s^* + \eps t , x)$.
%
Hypothesis (b) in \cite[Theorem 1.7]{CR} does not require $F_{xx} = D_2^2F$ to be continuous, and the conclusion is 
that $\varphi$ and $\psi$ are continuous.
In a remark after the theorem they state that if $F_{xx}$ is continuous, then $\varphi$ and $\psi$ are $C^1$, and this
is the result stated in Theorem~\ref{thm:CR1.7}.
\end{rem}

We now present our main result, which applies the BSE to nested invariant subspaces.  
Recall from Proposition~\ref{prop:motherExists} that the mother branch includes 
$\{(s, b_m(s)) : s \in I_m\}$, with  $b_m(s) \in W_m$ and $F(s, b_m(s)) = 0$ for $s \in I_m$, and $b_m(s^*) = x^*$.

\begin{thm}[Bifurcation Lemma for Invariant Subspaces]
\label{thm:BLIS}
Assume that the hypotheses of Proposition~\ref{prop:motherExists} hold. 
Let $b_m$ and $C_m$ be the mother branch function and mother branch.
Assume
$W_{m}\subsetneq W_{d}\subseteq X$ 
be nested $F$-invariant subspaces. 
Let $F_d:I\times W_d \to W_d$ be the restriction of $F$.
Assume $J:I_{m}\to L(W_{d})$, $J(s):=D_{2}F_d(s,b_m(s) )$
satisfies the following conditions:

\emph{(a)} $N(J(s^{*}))$ is one-dimensional, and $R(J(s^*))$ has codimension one.

\emph{(b)} $J'(s^{*})(x_0)\notin R(J(s^{*}))$, where $N(J(s^*)) = \spn(\{x_0\})$.

\noindent 
If $Z$ is any complement of $\spn(\{x_0\})$ in $W_d$, then
there is a neighborhood $U$ of $(s^*, x^*)$ in $I\times X$ and $C^1$-functions
$\varphi: (-a, a) \to \R$ and $\psi:(-a, a) \to W_d$ with $\varphi(0) = s^*$ and $\psi(0) = 0$ such that 
$$
F^{-1}(\{0\}) \cap (I \times W_d)\cap U = C_m \cup C_d,
$$
where
$$
C_d := \{(\varphi(\alpha), b_m(\varphi(\alpha)) + \alpha x_0 + \alpha \psi(\alpha)  ) : |\alpha| < a \}
$$   
is the so-called daughter branch. Furthermore, $C_m \cap C_d = \{(s^*, x^*)\}$.
That is,
there are exactly two branches in $(I \times W_d)\cap U$ that contain $(s^*, x^*)$; the mother branch $C_m$ and the daughter branch $C_d$.
\end{thm}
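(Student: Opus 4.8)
The plan is to reduce the statement to the BSE Theorem (Theorem~\ref{thm:CR1.7}) by a change of variables that slides the mother branch onto the trivial branch. Working in the Banach space $W_d$ (a closed subspace of $X$, so that the restriction $F_d$ is a map between Banach spaces), I would define
$$
\tilde F : I_m \times W_d \to W_d, \qquad \tilde F(s, w) := F\big(s,\, b_m(s) + w\big).
$$
This is well defined, since $b_m(s) \in W_m \subseteq W_d$ and $W_d$ is $F$-invariant, so $F(s, b_m(s)+w) \in W_d$; and it is $C^2$ because $F$ and $b_m$ are $C^2$. Geometrically $\tilde F = F_d \circ \Phi$ is the pullback of $F_d$ by the $C^2$-diffeomorphism $\Phi(s,w) := (s,\, b_m(s)+w)$ of $I_m \times W_d$, whose inverse is $\Phi^{-1}(s,x) = (s,\, x - b_m(s))$, and under $\Phi$ the trivial branch $\{(s,0)\}$ corresponds to the mother branch $C_m$.

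Next I would verify hypotheses (a)--(d) of Theorem~\ref{thm:CR1.7} for $\tilde F$. Hypothesis (a), $\tilde F(s,0) = F(s, b_m(s)) = 0$, is exactly the statement that $b_m$ is the mother branch function, and (b) is immediate from $C^2$ regularity. The key computation for (c) and (d) is that along $w = 0$ one has $D_2 \tilde F(s, 0) = D_2 F_d(s, b_m(s)) = J(s)$. Hence $D_2 \tilde F(s^*, 0) = J(s^*)$, so BLIS hypothesis (a) gives (c); and differentiating $s \mapsto D_2 \tilde F(s, 0) = J(s)$ in $s$ yields $D_1 D_2 \tilde F(s^*, 0) = J'(s^*)$, so BLIS hypothesis (b), $J'(s^*)(x_0) \notin R(J(s^*))$, is exactly (d). Here $x_0$ spans $N(J(s^*))$ and $Z$ is the chosen complement of $\spn(\{x_0\})$ in $W_d$.

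I would then apply Theorem~\ref{thm:CR1.7} to $\tilde F$, obtaining (after shrinking into $I_m \times W_d$) a neighborhood $\tilde U$ of $(s^*, 0)$, an interval $(-a,a)$, and $C^1$-functions $\varphi, \psi$ with $\varphi(0)=s^*$, $\psi(0)=0$, describing $\tilde F^{-1}(\{0\}) \cap \tilde U$ as the union of $\{(s,0)\}$ and $\{(\varphi(\alpha),\, \alpha x_0 + \alpha\psi(\alpha))\}$. Since $\Phi(\tilde F^{-1}(\{0\})) = F_d^{-1}(\{0\})$, applying $\Phi$ carries this onto $F^{-1}(\{0\}) \cap (I\times W_d) \cap \Phi(\tilde U)$: the trivial branch maps to $C_m$, while the nontrivial branch maps to
$$
\{(\varphi(\alpha),\, b_m(\varphi(\alpha)) + \alpha x_0 + \alpha\psi(\alpha)) : |\alpha| < a\} = C_d.
$$
The neighborhood $U$ of $(s^*,x^*)$ in $I \times X$ required by the statement is recovered from $\Phi(\tilde U)$ through the subspace topology: as $\Phi(\tilde U)$ is open in $I_m \times W_d$, there is an open $U \subseteq I \times X$ with $U \cap (I\times W_d) = \Phi(\tilde U)$, and the asserted identity only concerns zeros lying in $I\times W_d$.

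Finally I would confirm that $C_m, C_d$ are genuine branches meeting only at $(s^*,x^*)$. Because $\psi(\alpha)\in Z$, the $\spn(\{x_0\})$-coordinate of $\alpha x_0 + \alpha\psi(\alpha)$ equals $\alpha$, so this vector vanishes iff $\alpha=0$; this gives $C_m \cap C_d = \{(s^*,x^*)\}$ after applying $\Phi$, and it also shows the nontrivial parametrization is injective with $\alpha$-derivative at $0$ having nonzero second component $x_0$, so that both $C_m$ (the graph of the $C^2$ map $b_m$) and $C_d$ are simple $C^1$-curves once $a$ is shrunk. For the claim of \emph{exactly two} branches I would observe that $x_0 \notin W_m$: if $x_0\in W_m$ then $J(s^*)x_0=0$ would contradict the nonsingularity of $D_2F(s^*,x^*)|_{W_m}$ from Proposition~\ref{prop:motherExists}. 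Hence the tangents of $C_m$ and $C_d$ at $(s^*,x^*)$ are linearly independent, so no simple $C^1$-curve inside $C_m\cup C_d$ can pass from one to the other through the crossing, forcing $C_m$ and $C_d$ to be the only branches there. I expect the main obstacle to be the bookkeeping that transfers the BSE neighborhood $\tilde U \subseteq \R\times W_d$ to the stated $U\subseteq I\times X$, together with this transversality argument establishing the uniqueness of the two branches.
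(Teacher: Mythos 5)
Your proposal is correct and follows essentially the same route as the paper's proof: translate the mother branch to zero via $\tilde F(s,w) = F_d(s, b_m(s)+w)$, check that BLIS conditions (a) and (b) become BSE conditions (c) and (d) through $J(s) = D_2\tilde F(s,0)$ and $J'(s) = D_1D_2\tilde F(s,0)$, apply Theorem~\ref{thm:CR1.7}, and translate back. Your additional details --- the explicit diffeomorphism $\Phi$, the subspace-topology recovery of $U$, the observation that $x_0 \notin W_m$ by nonsingularity of $D_2F(s^*,x^*)|_{W_m}$, and the transversality argument for the ``exactly two branches'' claim --- are careful fillings-in of steps the paper states more briefly, not a different method.
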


\begin{proof}
Since $W_d$ is $F$-invariant, the restriction $F_d: I \times W_d \to W_d$ is well-defined.
We will show that the hypotheses of Theorem~\ref{thm:CR1.7}
apply to the function $\tilde F: I_m \times W_d \to W_d$ defined by
$$
\tilde F(s, x) = F_d(s,  b_m(s) + x).
$$
Condition (a) holds, since $\tilde F(s, 0) = F_d(s, b_m(s)) = 0$, by the definition of the mother branch function $b_m$.  
The restriction to $F_d$ is allowed since $b_m(s) \in W_m \subseteq W_d$.
Condition (b) holds: $\tilde F$ is $C^2$ since $F$ is $C^2$ and $b_m$ is $C^2$.
Conditions (c) and (d) require the computation of $D_2\tilde F(s^*,0)$
and $D_1 D_2 \tilde F(s^*,0)$.  
The Jacobian is 
$$
 D_2\tilde F(s, x) = D_2 F_d(s, b_m(s) + x),
$$
 and the stability of the solution at any $s$ on the mother branch is determined by
\begin{equation}
\label{eqn:Jac}
 J(s) := D_2 F_d(s, b_m(s)) = D_2 \tilde F(s, 0).
\end{equation}
While $J(s)$ is defined in terms of the first expression in Equation~(\ref{eqn:Jac}), we will use the second form of the expression for
the remainder of this proof.
Condition (a) of Theorem~\ref{thm:BLIS}  concerns
 $$
 J(s^*) = D_2\tilde F(s^*, 0)
 $$
 and is equivalent to Condition (c) of Theorem~\ref{thm:CR1.7}.
 Condition (b) of Theorem~\ref{thm:BLIS}  concerns
 $$
J'(s) = 
 D_1 D_2 \tilde F (s, 0),
 $$
and is equivalent to Condition (d) of Theorem~\ref{thm:CR1.7}.

Thus, Theorem~\ref{thm:CR1.7} holds for the function $\tilde F$, and for the functions $\varphi$ and $\psi$ defined in that theorem, 
$$
\tilde F(\varphi(\alpha), \alpha x_0 + \alpha \psi(\alpha)) = 0,
$$
for $| \alpha | < a$.
By the definition of $\tilde F$ in terms of $F_d$, 
$$
F_d(\varphi(\alpha), b_m(\varphi(\alpha)) + \alpha x_0 + \alpha \psi(\alpha)) = 0,
$$
for $| \alpha | < a$, and the daughter branch is given parametrically as desired.
Theorem~\ref{thm:CR1.7} ensures that the mother and daughter branch are the only local branches in $I\times W_d$.
Finally, the point on the daughter branch $C_d$ when $\alpha = 0$ is $(\varphi(0), b_m(\varphi(0)) ) = (s^*, x^*)$.
There is no other point on the daughter branch in $I \times W_m$, since $b_m(s) \in W_m$, and $x_0 \not\in W_m$, and
$\psi(\alpha)$ is in $Z$.  Thus, the only point of intersection of $C_m$ and $C_d$ is
$(s^*, x^*)$.
 \end{proof}

\begin{rem} 
\label{rem:BLISremarks}
We give several observations.
\begin{enumerate} 
\item  
Condition (b) in Theorem~\ref{thm:BLIS} typically holds in applications, but it is difficult to check when 
the mother branch function is not known explicitly.
This nondegeneracy condition implies that a simple
eigenvalue of $J(s)$ crosses 0 with nonzero speed at $s = s^*$.  This leads to an informal statement of the BLIS:  

{\it Suppose $W_m \subsetneq W_d$ are invariant subspaces of $F:\R \times X \to X$, with $F(s^*, x^*) = 0$ for $x^* \in W_m$.
Let $K = N(D_2 F(s^*, x^*))$.
If 
$$K \cap  W_m= \{0\} \text{  and } \dim (K \cap W_d) = 1,
$$
then generically $F(s,x) = 0$ has exactly two solution branches in $\R \times W_d$ that contain $(s^*, x^*)$;
the mother branch which is in $\R \times W_m$, and the daughter branch.
}

See \cite[pg.~17]{GSbook2023} for a discussion of the meaning of ``generically,'' which is rather subtle.
In summary, generic behavior is typical.
%

\item Every EBL bifurcation is a BLIS bifurcation.
That is, our Theorem~\ref{thm:BLIS} implies the results usually obtained by an equivariant  Lyapunov-Schmidt reduction 
followed by an application of the EBL \cite{cicogna, GSS, vander}: 

{\it Assume that a compact Lie group $\Gamma$ acts on a Banach space $X$, that 
$F:I \times X \to X$ is $\Gamma$-equivariant, and that
$\Sigma_d < \Sigma_m \leq \Gamma$.
Let  $x^* \in \fix(\Sigma_m)$ satisfy $F(s^*, x^*) = 0$ and 
let $K = N(D_2 F(s^*, x^*))$.
If 
$$
K\cap  \fix(\Sigma_m) = \{0\} \text{ and } \dim (K\cap  \fix(\Sigma_d)) = 1,
$$
then generically a branch of solutions in $I \times \fix(\Sigma_d)$ bifurcates from the mother branch,
which is
in $I \times \fix(\Sigma_m)$,
at $(s^*, x^*)$.
}
\item 
Our Theorem~\ref{thm:BLIS} is a generalization of the Synchrony Branching Lemma for regular coupled cell networks. See \cite[Theorem 6.3]{GL_SBL}, \cite[Theorem~8.2]{FranciGolubitskyStewart} and \cite[Theorem 18.15]{GSbook2023}.  This theorem was generalized in \cite{soares}.  
We state this theorem, indicating the mother and daughter subspaces:

{\it 
Let
$X$ be the finite dimensional phase space of a network with the admissible map $F: \R \times X \to X$, defined in 
\cite{FranciGolubitskyStewart}. Let $W_m = \Delta$ be  
the space of fully synchronous states, and let $W_d = \Delta_{\bowtie}$ be the synchrony subspace for some balanced coloring $\bowtie$.
Let $x^* \in \Delta$, $s^* \in \R$, and let $K = N(D_2 F(s^*, x^*))$.
If 
$$K \cap \Delta = \{0\} \text{ and } \dim (K \cap \Delta_{\bowtie}) = 1,$$
then generically a unique branch of equilibria with synchrony pattern ${\bowtie}$ bifurcates from $(s^*, x^*)$.
}
\item
There might be many other branches bifurcating from $(s^*, x^*)$.  
When we restrict to branches in $I \times W_d$, however, there are exactly two branches (the mother and daughter branches).
\item
\label{rem:XandWd}
Note that
$$
N(J(s^*)) := N(D_2 F_d(s^*, x^*) ) = N( D_2 F(s^*, x^*)) \cap W_d .
$$
As in Part (1) of this remark, let $K = N( D_2 F(s^*, x^*))$.  One of the hypotheses in Proposition~\ref{prop:motherExists}
is equivalent to $K \cap W_m = \{0\}$. 
Condition (a) of Theorem~\ref{thm:BLIS} implies that $\dim (K \cap W_d) = 1$.
\item
If $W_d$ is finite dimensional, then the two parts of Condition (a) of Theorem~\ref{thm:BLIS} are equivalent.
That is, we only need to check that $N(J(s^*))$ is one-dimensional.
\item
Rather than having one daughter branch that crosses the mother branch at the bifurcation point, 
we often consider 
two daughter branches $C_d^+$ and $C_d^-$, 
born at the bifurcation, 
defined by
$$
C_d^\pm := \{(\varphi(\alpha), b_m(\varphi(\alpha) + \alpha x_0 + \alpha \psi(\alpha) ) : 0 < \pm \alpha  < a \} .
$$   
Note that $0 < \alpha < a$ for the parametric curve $C_d^+$ while $-a < \alpha < 0$ for $C_d^-$.
Neither of these branches include $(s^*, x^*)$, and
$$
C_m \cup C_d = C_m \dot\cup C_d^+ \dot \cup C_d^-.
$$
The MATLAB$\textsuperscript{\textregistered}$ code, described in Section \ref{sec:Implementation} and freely available at \cite{NSS8_companion}, follows the branches $C_d^+$ and $C_d^-$ separately, starting from their birth at $(s^*, x^*)$.
At a pitchfork bifurcation where the two daughter branches are conjugate by a symmetry of the system, 
the code only follows $C_d^+$. 
\end{enumerate}
\end{rem}

\section{Applications of the BLIS}
\label{sec:Examples}
We consider several examples of bifurcations and classify them as BSE, EBL, or BLIS, as indicated in Figure~\ref{VennDiagram}.
\subsection{One-Dimensional Examples}
We consider some applications of BSE, EBL, and BLIS in the case where $X = \R$.
We will write examples of ODES, $\dot x = F(s, x)$, and look for the equilibrium solutions which satisfy
$F(s, x)  = 0$, for functions $F: \R \to \R$.  In such cases, $J(s)$ in the statement of Theorem~\ref{thm:BLIS} 
is a $1 \times 1$ matrix.
\begin{example}
\label{pitchforkInR}
The ODE
$$
\dot x = F(s, x) := s x - x^3
$$
has a pitchfork bifurcation at $(s, x) = (0, 0)$.
In this case, $W_m = \{0\} \subseteq W_d = \R$.  Factoring $F$ shows that the mother branch has the equation $x = 0$, and the daughter branch has the equation $s = x^2$.

The mother branch function is $b_m(s) = 0$ for all $s$,
and $J(s) = s \equiv [s] \in \R^{1 \times 1}$.  The bifurcation point is $s^* = 0$, and $J(0) = 0$, and $J'(0) = 1$.
The system has the symmetry $F(s, -x) = -F(s, x)$, so both $W_m$ and $W_d$ are fixed point subspaces.
The bifurcation is BSE, EBL, and BLIS.
See Figure~\ref{1Dbifs}(a).
\end{example}

\begin{example}
\label{transInR}
The ODE
$$
\dot x = F(s, x) := s x - x^2
$$
has a so-called transcritical bifurcation at $(s, x) = (0, 0)$.
In this case, $W_m = \{0\} \subseteq W_d = \R$.  Factoring $F$ shows that the mother branch has the equation $x = 0$, and the daughter branch has the equation $s = x$.

The mother branch function is $b_m(s) = 0$ for all $s$,
and $J(s) = [s] \in \R^{1 \times 1}$.  The bifurcation point is $s^* = 0$, and $J(0) = [0]$, and $J'(0) = [1]$.
The trivial subspace is $F$-invariant since $F(s, 0) = 0$ for all $s$,
The bifurcation is BSE and BLIS.
See Figure~\ref{1Dbifs}(b).
\end{example}

\begin{example}
\label{trans2InR}
The ODE
$$
\dot x = F(s, x) := s^2 - x^2
$$
has a different kind of transcritical bifurcation at $(s, x) = (0, 0)$.
In this case, the only $F$-invariant subspace is $\R$, so this is not a BLIS bifurcation.
The function $F$ has no symmetry, so this is not an EBL bifurcation.
Factoring $F$ shows that there are two branches, $x = s$ and $s = -x$.  
Arbitrarily choose one of these branches,
$x = b_m(s) := -s$ and define $\tilde F(s, x) = F(s, x - b_m(s)) = s^2 - (x+s)^2 = 2 s x - x^2$.  
The BSE Theorem~\ref{thm:CR1.7} and BLIS Theorem~\ref{thm:BLIS} both apply to $\tilde F$ with $s^* = 0$.  
The relevant calculations are that $D_2 \tilde F(s, x) = 2s - 2x$, so $D_2 \tilde F(0,0) = 0$
and $D_1 D_2F(s, x) = 2$. The critical eigenvector is $x_0 = 1$, so $\spn(\{x_0\}) = \R$.
It is clear from factoring $\tilde F$ that the two branches of $\tilde F(s, x) = 0$ are $x = 2 s$ and $x = 0$, so
$$
\tilde F^{-1}(\{0\}) = \{ (\alpha/2, \alpha) : \alpha \in \R \} \cup \{(s, 0) : s \in \R\}.
$$
Theorems~\ref{thm:CR1.7} and \ref{thm:BLIS} hold with $\phi(\alpha) = \alpha/2$.  

Theorem~1 of  \cite{CR} describes how the original function $F$, which a branch of nontrivial solutions, has a bifurcation.
Thus we say that $F$ has a BSE
bifurcation, as indicated in Figure~\ref{VennDiagram}.  

Although Theorem~\ref{thm:BLIS} applies to the reduced function $\tilde F$, it does not apply directly to $F$.  
Since the original function $F$ has no nested invariant subspaces,
we do not include this example in the BLIS region of Figure~\ref{VennDiagram}.

Note that this example does not occur generically in one-parameter families.  If a small
term $\eps$ is added, then the system becomes $\dot x = F(s, \eps, x) := s^2 + \eps - x^2$,
which has a qualitatively different bifurcation diagram of $x$ vs. $s$ for fixed $\eps > 0$, $\eps = 0$, or $\eps < 0$.
In contrast, the transcritical bifurcation in Example~\ref{transInR} is generic for the class of systems with $F(s, 0) = 0$, which is a natural
constraint for some models.
See Figure~\ref{1Dbifs}(c).
\end{example}

\begin{example}
\label{foldInR}
The ODE
$$
\dot x = F(s, x) := s - x^2
$$
has the equilibrium solutions
$$
F^{-1}(\{0\}) = \{ (\alpha^2, \alpha) : \alpha \in \R \}.
$$
The point $(s,x) = (0,0)$ is a fold point, 
We do not call this a saddle-node bifurcation, 
following our Definition~\ref{def:bifPt}.
Since there is only one solution branch and no branching of solutions, 
neither the  BSE nor the BLIS apply.
See Figure~\ref{1Dbifs}(d).
\end{example}

\begin{figure}
\begin{center}
\begin{tikzpicture}[scale =2.0]
\draw (.58,0) -- ++(-.5,0);
\draw[dashed] (.58,0) -- ++(.5,0);
\draw(.58,0) arc [start angle = 180, end angle = 235, x radius = 1.1, y radius = .6]; 
\draw(.58,0) arc [start angle = 180, end angle = 125, x radius = 1.1, y radius = .6];
\node[label=above : {(a)}] at (.6,-.85) {};
\filldraw (.58,0) circle [radius=.6pt];

\draw  (2.15,0) -- ++(-.5,0);
\draw[dashed]  (2.15,0) -- ++(.5,0);
\draw [dashed] (2.15,0) -- ++(-.5,-.5);
\draw (2.15,0) --++(.5,.5);
\node[label=above : {(b)}] at (2.15,-.85) {};
\filldraw (2.15,0) circle [radius=.6pt];

\draw[dashed]  (3.9,0) -- ++(-.5,-.5);
\draw[dashed]  (3.9,0) -- ++(.5,-.5);
\draw               (3.9,0) -- ++(.5,.5);
\draw               (3.9,.0) -- ++(-.5,.5);
\node[label=above : {(c)}] at (3.9,-.85) {};
\filldraw (3.9,0) circle [radius=.6pt];

\draw (5.1,0) arc [start angle = 180, end angle = 125, x radius = 1.1, y radius = .6];
\draw[dashed] (5.1,0) arc [start angle = 180, end angle = 235, x radius = 1.1, y radius = .6];
\node[label=above : {(d)}] at (5.2,-.85) {};
\filldraw (5.1,0) circle [radius=.6pt];

\draw [dashed] (6.6,0) -- ++(-.5,0);
\draw[dashed]  (6.6,0) -- ++(.5,0);
\draw (6.6,0) -- ++(-.5,-.5);
\draw (6.6,0) --++(.5,.5);
\draw (6.6,0) --++(-.5,.5);
\draw (6.6,0) --++(.5,-.5);
\node[label=above : {(e)}] at (6.6,-.85) {};
\filldraw (6.6,0) circle [radius=.6pt];

\end{tikzpicture}
\end{center}

\caption{
\label{1Dbifs}
The $x$ vs.\ $s$ bifurcation diagrams of the ODEs featured in Examples \ref{pitchforkInR}(a), 
\ref{transInR}(b), \ref{trans2InR}(c), \ref{foldInR}(d), and \ref{nongenericInR}(e).
In each case the dot is at $(s,x) = (0,0)$, and the solid lines represent a branch of stable equilibrium solutions to $\dot x = F(s, x)$.  The dashed lines
represent a branch of unstable equilibrium solutions.
}
\end{figure}
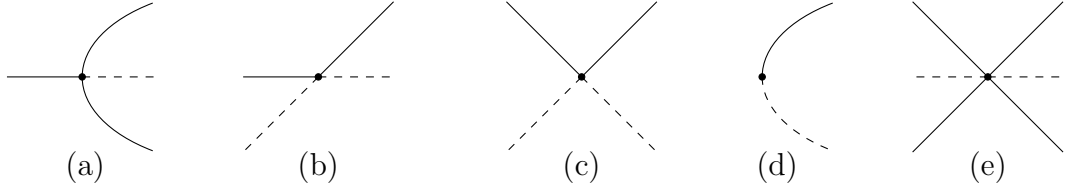

\begin{example}
\label{nongenericInR}
Consider the ODE
$$
\dot x = F(s, x) := s^2 x - x^3 = x(s+x)(s- x).
$$
The mother branch function is $b_m(s) = 0$, and the stability of the trivial branch
is determined by $J(s) = s^2$. 
Condition (a) of Theorem~\ref{thm:BLIS} holds, with  $(s^*, x^*) = (0,0)$, 
$W_m = \{0\}$, $W_d = \R$,
and the critical eigenvector $x_0 = 1$.
However the nondegeneracy Condition (b) of Theorem~\ref{thm:BLIS}  does not hold,
since $J'(s) = 2s$ and thus $J'(0)(1) = 0 \in R(J(0))$.
Similarly, the BSE does not hold for this example, since Condition (d) of Theorem~\ref{thm:CR1.7}
is false.
The function $F$ does have symmetry, but the EBL does not hold.
This nongeneric example is one of the exceptions referred to in 
Remark~\ref{rem:BLISremarks}(1).
The BLIS says that 
generically there are exactly two solution branches in $\R \times X$ that contain $(s^*, x^*)$.
Similar examples of $F: \R \times X \to X$ for which Condition (a) of the BLIS holds, 
but Condition (b) does not hold, can be manufactured for any $X$.
See Figure~\ref{1Dbifs}(e).
\end{example}

\subsection{Network Examples}
We apply the BLIS to systems of networks of $n$ coupled cells defined by $\dot x = F(s, x)$ for $F: \R \times (\R^k)^n \to (\R^k)^n$.  The ODE for each cell with phase space $x_i \in \R^k$ is
\begin{equation}
\label{coupledCells}
{\dot x}_i = F(s, x)_i := f(s,x_i) + H \sum_{j=1}^n M_{i,j} x_j = f(s, x_i) + H (Mx)_i,
\end{equation}
where $f: \R \times \R^k \to \R^k$ describes the internal dynamics of each identical cell, 
$H \in \R^{k\times k}$ is the coupling matrix, 
and $M\in \R^{n\times n}$ is the \emph{weighted adjacency matrix} of the network.
Note that we multiply a real matrix $M\in\R^{n\times n}$ with a matrix $x\in (\R^k)^{n\times 1}$ whose entries come from $\R^k$, resulting in a matrix $Mx\in(\R^k)^{n\times 1}$, which we identify with $(\R^k)^n$. 

In order to understand the BLIS bifurcations, we first
describe the subspaces that are invariant under System~(\ref{coupledCells}) for a fixed $M$ with any $f$ and $H$. 
These invariant subspaces are related to the $M$-invariant \emph{polydiagonal subspaces} of $\R^n$, as described in \cite{NiSS}.
Here we show that
the dynamics on one of these invariant subspaces is also described by System~(\ref{coupledCells}) but with
$d$ coupled cells, where the 
$n\times n$ matrix $M$ is replaced by a smaller $d \times d$ matrix.  
We can think of any matrix $M$ as the adjacency matrix of a weighted digraph with $n$ vertices, 
and then the new $d\times d$
matrix is the weighted adjacency matrix of the 
\emph{weighted quotient digraph} with $d$ vertices.

We use the definition of polydiagonal subspaces found in \cite{NiSS}, which includes both synchrony and anti-synchrony subspaces.
This development builds on the work of \cite{AD2021,GSbook2023}, but the terminology is different.
Here we provide a
characterization of a nontrivial polydiagonal subspace 
as the column space of a matrix with special properties.  This new characterization
lends itself well to numerical computations, and is used 
in our MATLAB$\textsuperscript{\textregistered}$ program \cite{NSS8_companion}.

\begin{definition} 
\label{def:polyBasisMatrix} 
For $0 < d \leq n$, 
a \emph{polydiagonal basis matrix} is  a matrix $B \in \{1,0, -1\}^{n \times d}$
that satisfies the following three conditions:  
\begin{enumerate}
\item $\text{rank}\,(B) = d$;
\item every row of $B$ has at most one non-zero entry;
\item $B^T$ is in reduced row-echelon form.
\end{enumerate}
If, in addition, every row of $B$ has an entry of 1, then $B$ is called a \emph{synchrony basis matrix}.
Otherwise, $B$ is called an \emph{anti-synchrony basis matrix}.
\end{definition}

Thus, a synchrony basis matrix only has elements 0 and 1, and every row has a single 1.
An anti-synchrony basis matrix has a row of all 0's or a row with a single $-1$.

\begin{prop}
A nontrivial polydiagonal subspace of $\R^n$ is the column space of a unique polydiagonal matrix. 
Furthermore, $\col(B)$ is a synchrony/anti-synchrony subspace if and only if $B$ is a synchrony/anti-synchrony basis matrix.
\end{prop}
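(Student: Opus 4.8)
The plan is to prove the two assertions separately: first the existence and uniqueness of the polydiagonal basis matrix, and then the synchrony/anti-synchrony dictionary. Throughout I use the description of a polydiagonal subspace from \cite{NiSS} as the solution set of a finite system of equations of the forms $x_i = x_j$, $x_i = -x_j$, and $x_i = 0$; equivalently, $V$ is determined by a partition of $\{1,\dots,n\}$ into classes together with a relative sign on each class, where one distinguished class (possibly empty) is forced to zero. The key structural fact I will extract is that $V$ is spanned by vectors with pairwise disjoint supports whose nonzero entries all lie in $\{-1,1\}$ — exactly the raw material of Definition~\ref{def:polyBasisMatrix}.

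For existence, I would first resolve the sign data on each nonzero class. Fixing a representative coordinate and propagating the $\pm$ relations determines a consistent sign $\sigma_i \in \{-1,1\}$ for each coordinate of that class, \emph{unless} the relations are sign-inconsistent, in which case the whole class collapses to $0$ and is absorbed into the zero class. This consistency check is the one genuinely delicate step, since it is what decides whether a class contributes a free parameter or a row of zeros. Once the classes are sorted, for each nonzero class $C$ I form the vector with entries $\sigma_i$ for $i \in C$ and $0$ elsewhere, after a global sign flip normalizing the sign to $+1$ at the least index of $C$. Listing these columns in increasing order of their least support index produces $B$. I then verify Definition~\ref{def:polyBasisMatrix}: entries lie in $\{1,0,-1\}$ by construction; each row has at most one nonzero entry because the supports are disjoint; $\mathrm{rank}(B) = d$ because the columns have disjoint supports and are nonzero; and $B^T$ is in reduced row-echelon form because the leading $+1$'s occur in strictly increasing columns and each pivot column of $B^T$ — being a single coordinate of $\R^n$ — meets only one class and hence has a lone nonzero entry. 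By construction $\col(B) = V$.

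Uniqueness is then immediate from the uniqueness of reduced row-echelon form. If $B$ and $B'$ are polydiagonal basis matrices with $\col(B) = \col(B') = V$, then $\mathrm{rank}(B) = \mathrm{rank}(B') = \dim V$, so both have the same number of columns and $B^T$, $B'^T$ are $(\dim V) \times n$ matrices with no zero rows. Their common row space is $\{v^T : v \in V\}$, and both are already in reduced row-echelon form; since the reduced row-echelon form of a matrix is determined by its row space, $B^T = B'^T$, whence $B = B'$.

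Finally, the synchrony/anti-synchrony statement is a matter of unwinding definitions through the bijection just established. A synchrony subspace is cut out using only the relations $x_i = x_j$: there are no forced-zero coordinates and no sign flips, so every $\sigma_i = +1$ and the zero class is empty. This says precisely that $B$ has no zero rows and no $-1$ entries, i.e.\ every row of $B$ carries an entry equal to $1$, which is the defining condition for a synchrony basis matrix; the anti-synchrony case is the logical complement, where a forced-zero coordinate yields a zero row and a sign-flipped relation yields a $-1$. I expect the only real obstacle to be the sign-consistency bookkeeping in the existence step, where one must show that propagating signs around a class either yields a well-defined $\sigma$ or forces the class to zero; everything else is either a direct verification of Definition~\ref{def:polyBasisMatrix} or a one-line appeal to the uniqueness of row-echelon form.
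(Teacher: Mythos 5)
Your proof is correct and follows the same route the paper indicates: the paper itself offers only a remark (calling the proof ``straightforward but tedious'') that points to the equation-based description of polydiagonal subspaces and notes that conditions (1) and (3) of Definition~\ref{def:polyBasisMatrix} force uniqueness, which is exactly the skeleton you flesh out. Your construction of $B$ from sign-resolved classes, the uniqueness argument via the canonical reduced row-echelon form of $B^T$, and the unwinding of the synchrony/anti-synchrony dictionary supply precisely the details the paper leaves to the reader.
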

\begin{rem}
The proof of this proposition is straightforward but tedious.
The connection with the definition of a polydiagonal subspace in \cite{AD2021,NiSS}  is that a polydiagonal subspace and $\col(B)$ can both be defined by equations of the form 
$x_i = x_j$ or $x_i = - x_j$ (including $x_i = 0$ when $i = j$).
Any set of equations defining an anti-synchrony subspace has at least one equation of the form $x_i = - x_j$.
Conditions (1) and  (3) in Definition~\ref{def:polyBasisMatrix} ensure that $\col(B_1) = \col(B_2)$ implies $B_1 = B_2$.

\end{rem}
Note that the trivial subspace $\{0\} \subseteq \R^n$ is an anti-synchrony subspace according to the definition
in \cite{NiSS}, but it is not the column space of any non-empty matrix.

\begin{example}
The matrix $B := \left[ \begin{smallmatrix} 1 & 0 & 1\\ 0 & 1 &0 \end{smallmatrix} \right ]^T$ is the 
synchrony basis matrix for 
$\{(a, b, a) : a,b \in \R 
\} \subseteq \R^3$.
The corresponding tagged partition, as defined in \cite{NiSS}, is $\mathcal P = \{ \{1,3\}, \{2\} \}$ with the empty partial involution.

The matrix $B := \left[ \begin{smallmatrix} 1 & 0 & 0 & -1 \\ 0 &1 & 0 &0 \end{smallmatrix} \right ]^T$ is the 
anti-synchrony basis matrix for 
$\{(a, b, 0, -a) : a,b \in \R 
\} \subseteq \R^4$.
The corresponding tagged partition is $\mathcal P = \{ \{1\} ,\{2\}, \{3\}, \{4\} \}$ with $\{1\}^* = 
\{4\}$ and $\{3\}^* = \{3\}$. Note that $\{2\}$ is not in the domain of the partial involution.
\end{example}

Let $B\in \R^{n\times d}$ be a polydiagonal basis matrix.  
Then $\col(B)$ is 
$M$-invariant if and only if $\col(MB) \subseteq \col(B)$.  See \cite{NSS7}
for an efficient way to find the $M$-invariant synchrony subspaces of $\R^n$. 
Note that the columns of $B$ are orthogonal due to Condition~(2), 
and $B^T B$ is a diagonal $d\times d$ matrix, with $(B^TB)_{\ell,\ell}$ equal to
the number of nonzero components in the $\ell$th column of $B$.  
Condition (1) implies that $(B^TB)_{\ell,\ell} > 0$.
The  Moore-Penrose inverse (pseudoinverse)
$$
B^+ = (B^T B)^{-1} B^T \in \R^{d\times n}
$$
of $B$ is easily computed since $B^TB$ is diagonal and nonsingular.
The pseudoinverse satisfies $B^+B = I_d$ and $B B^+$ is the projection of $\R^n$ onto $\col(B)$.

\begin{example}
\label{Bplus}
Let $B = \left[ \begin{smallmatrix} 1 & 0 & 0 & -1 \\ 0 &1 & 0 &0 \end{smallmatrix} \right ]^T$.
Then $B^T B = \left[ \begin{smallmatrix} 2 & 0  \\ 0 &1 \end{smallmatrix} \right ]$,
so
$$
B^+ =  \begin{bmatrix}2 & 0 \\ 0 &1 \end{bmatrix}^{-1}  \begin{bmatrix} 1 & 0 & 0 & -1 \\ 0 &1 & 0 &0 \end{bmatrix}  =  \begin{bmatrix} 0.5 & 0 & 0 & -0.5 \\ 0 &1 & 0 &0 \end{bmatrix}
$$
is the pseudoinverse of $B$.
\end{example}

We now consider the dynamically invariant subspaces of 
System~(\ref{coupledCells}), which are precisely the $F$-invariant subspaces.
Proposition~4.1 of \cite{NiSS} describes these subspaces.
If $M$ and $H \neq 0$ are fixed, then the 
nontrivial
subspaces of $(\R^k)^n$ 
that are $F$-invariant for all odd $f$ are precisely the subspaces of the form
$$
W_B := \{B y : y \in (\R^k)^d\} \subseteq (\R^k)^n
$$
for some polydiagonal basis matrix
$B \in \R^{n \times d}$ such that $\col(B)$ is $M$-invariant.
The trivial subspace $\{0\} \subseteq (\R^k)^n$
$F$-invariant whenever $f(0) = 0$.
Furthermore, if $f$ is not odd and $B$ is a synchrony basis matrix, then $W_B$ is $F$-invariant.

Note that $By\in (\R^k)^{d\times 1}\equiv(\R^k)^d$ is a product similar to 
$Mx$ in System~(\ref{coupledCells}). If  $k = 1$, then $W_B = \col(B)$. 
If $k > 1$, then $W_B = \R^k \otimes \col(B)$, which is the tensor product of $\R^k$ with the polydiagonal subspace $\col(B)$ \cite{NiSS}.

\begin{example}
Let $B = \left[ \begin{smallmatrix} 1 & 0 & 0 & -1 \\ 0 &1 & 0 &0 \end{smallmatrix} \right ]^T$ and $k = 3$, so that $y = (y_1, y_2) \in (\R^3)^2$. We find that 
$$
By = \left[ \begin{matrix} 1 & 0 \\  0 &1 \\ 0 & 0 \\  -1 & 0 \end{matrix} \right ] 
\left [ \begin{matrix} y_1 \\ y_2 \end{matrix} \right ] 
= \left [ \begin{matrix} y_1 \\  y_2 \\ 0 \\ -y_1 \end{matrix} \right ]
\equiv (y_1,y_2,0,-y_1),
$$
so $W_B = \{ (y_1, y_2, 0, -y_1) : y_1, y_2 \in \R^3\} \subseteq (\R^3)^4$. 
A typical element of $W_B$ is $(y_1, y_2, 0, -y_1)$, which we can abbreviate to $(a, b, 0, -a)$, as in Figure~\ref{quotientWeightedDigraph}.
\end{example}

We now describe how to restrict the System~(\ref{coupledCells}) to an invariant subspace $W_B$.
To simplify the exposition we first consider a network of uncoupled cells with no parameter. 

\begin{prop}
\label{uncoupled}
Let $B \in \R^{n \times d}$ be a polydiagonal basis matrix, $f: \R^k \to \R^k$, and
$F: (\R^k)^n \to (\R^k)^n$ defined by $F(x)_i := f(x_i)$.
If $f$ is odd or $B$ is a synchrony basis matrix, then $F_B: (\R^k)^d \to (\R^k)^d$ defined by $F_B(y) := B^+ F(By)$ satisfies 
\[
F_B(y)_\ell = f(y_\ell).
\] 
\end{prop}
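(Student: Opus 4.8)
The plan is to prove the identity by a direct coordinate computation, unwinding the three maps $B$, $F$, and $B^+$ in turn and exploiting the sparsity of $B$. First I would expand the $i$-th block of $By$: since Condition~(2) of Definition~\ref{def:polyBasisMatrix} guarantees that row $i$ of $B$ has at most one nonzero entry, each block $(By)_i$ is either $0$ (when row $i$ vanishes) or equals $B_{i,\ell}\, y_\ell$ for the unique column $\ell = \ell(i)$ with $B_{i,\ell} \in \{1,-1\}$. Applying $F$ blockwise then gives $F(By)_i = f(B_{i,\ell}\, y_\ell)$ on the nonzero rows and $f(0)$ on the zero rows.

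Next I would compute $F_B(y)_\ell = (B^+ F(By))_\ell$ using $B^+ = (B^TB)^{-1}B^T$. Because $B^TB$ is diagonal with $(B^TB)_{\ell,\ell} = c_\ell$ equal to the number of nonzero entries in column $\ell$, we have $(B^+)_{\ell,i} = B_{i,\ell}/c_\ell$, so
$$
F_B(y)_\ell = \frac{1}{c_\ell}\sum_{i=1}^n B_{i,\ell}\, F(By)_i = \frac{1}{c_\ell}\sum_{i \in S_\ell} B_{i,\ell}\, f(B_{i,\ell}\, y_\ell),
$$
where $S_\ell := \{i : B_{i,\ell}\neq 0\}$ has exactly $c_\ell$ elements; the zero rows drop out because they carry $B_{i,\ell}=0$, so the value $f(0)$ is never seen. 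The whole identity therefore reduces to showing that each summand $B_{i,\ell}\, f(B_{i,\ell}\, y_\ell)$ equals $f(y_\ell)$.

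This is where the hypothesis enters, and it is the only real point of the argument. If $B$ is a synchrony basis matrix then every $B_{i,\ell}$ with $i \in S_\ell$ equals $+1$, so each summand is literally $f(y_\ell)$. If instead $f$ is odd, then for an entry $B_{i,\ell} = -1$ we have $(-1)f(-y_\ell) = (-1)(-f(y_\ell)) = f(y_\ell)$, while for $B_{i,\ell}=+1$ the summand is again $f(y_\ell)$; thus oddness of $f$ is exactly what neutralizes the sign. In either case every summand equals $f(y_\ell)$, and averaging $c_\ell$ identical copies gives $F_B(y)_\ell = f(y_\ell)$.

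Since each step is elementary, the ``obstacle'' is really careful bookkeeping: keeping the block structure of $(\R^k)^n$ straight, confirming via Condition~(1) that $c_\ell>0$ so the average is well-defined, and recognizing that the two alternative hypotheses are each tailored to cancel the $\pm1$ signs in the relevant column of $B$. It is worth noting that the hypothesis cannot be dropped: if a column of an anti-synchrony matrix carries both $+1$ and $-1$ entries and $f$ is not odd, then the average becomes a nontrivial combination of $f(y_\ell)$ and $f(-y_\ell)$, not $f(y_\ell)$.
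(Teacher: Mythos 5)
Your proposal is correct and follows essentially the same route as the paper's proof: expand $(By)_i$ using the at-most-one-nonzero-entry-per-row property, use the diagonal structure of $B^TB$ to write $(B^+)_{\ell,i}$ explicitly, and observe that the hypothesis (synchrony, or oddness of $f$) makes every summand $B_{i,\ell}\,f(B_{i,\ell}\,y_\ell)$ equal to $f(y_\ell)$, so the average over the $c_\ell$ nonzero rows collapses. Your observation that $f(0)$ is never seen (because zero rows of $B$ give zero entries of $B^+$) is a slightly cleaner bookkeeping of the point the paper handles by noting $f(0)=0$ when $f$ is odd, but the argument is the same.
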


\begin{proof}
Let 
\[
N := \{i \mid (\exists \ell) \text{ } B_{i,\ell} \neq 0 \}
\] 
be the set of indices $i$ such that the $i$th row of $B$ has a nonzero element. 
Define $g: N \to \{1, \ldots, d\}$ and $\sig: N \to \{1, -1\}$ such that
$$
B_{i, \ell} = 
\begin{cases}
\sig(i) & \text{if  } \ell = g(i)\\
0       & \text{otherwise.}
\end{cases}
$$
If  $i \not \in N$, then $(By)_i = 0$.  If $i \in N$, then 
$(By)_i = \sum_{\ell = 1}^n B_{i, \ell} y_\ell = \sig(i) y_{g(i)}$ and 
$F(By)_i = f((By)_i) = f(\sig(i) y_{g(i)})$.

If $f$ is odd, then $F(By)_i = f(0) = 0$ for $i \not \in N$ and 
$F(By)_i = f(\sig(i) y_{g(i)} ) =\sig(i) f(y_{g(i)})$ for $i \in N$.
If $B$ is a synchrony basis matrix, then $\sig(i) = 1$  for all $i \in N = \{1, \ldots, n\}$.
In either case, 
\[
F(By)_i =\begin{cases}
\sig(i) f(y_{g(i)}) & \text{if } i \in N      \\
0                   & \text{if } i \not \in N  .
\end{cases}
\]

Let $h(\ell) := (B^TB)_{\ell, \ell}$ be the number of nonzero elements in the $\ell$th column of $B$. 
Note that $h(\ell)$ is the size of the set $g^{-1}(\ell)$.
The pseudoinverse of $B$ has components
$$
B^+_{\ell, i} = 
\begin{cases}
\frac{\sig(i)}{
h(\ell) } & \text{if  } \ell = g(i)\\
0 & \text{otherwise.}
\end{cases}
$$
Then
$$
F_B(y)_\ell 
= \sum_{i = 1}^n B^+_{\ell, i} F(By)_i 
=  \sum_{i \in g^{-1}(\ell)} \frac{\sig(i)}{h(\ell)} \sig(i) f(y_{g(i)}) 
= h(\ell)  \frac{f(y_\ell)}{h(\ell)} 
= f(y_\ell)
$$
since the sum has $h(\ell)$ nonzero terms, all equal to $f(y_\ell)/h(\ell)$.
\end{proof}

\begin{rem}
Proposition~\ref{uncoupled} is the motivation for the definition of a polydiagonal basis matrix. 
Consider $B = [2 \ 0]^T$, 
which is not a polydiagonal basis matrix 
since $B \not \in \{1,0, -1\}^{n \times d}$
even though 
$\col(B)$ is a polydiagonal subspace which is $F$-invariant for the uncoupled system
in Proposition~\ref{uncoupled} if $f(0) = 0$.
Since $F_{B}(y)_1 = \frac 1 2 f(2 y_1)$ and 
in general $\frac 1 2 f(2 y_1) \not = f(y_1)$,
the single reduced cell $\dot y_1 = F_{B}(y)_1$ may not have the internal dynamics $\dot y_1 = f(y_1)$.
\end{rem}


We now use Proposition~\ref{uncoupled} to write $F_B$ in terms of $y \in \R^k$, including the parameter $s$.

\begin{prop}
\label{linearlyCoupled}
Let $B \in \R^{n \times d}$ be a polydiagonal basis matrix, $\col(B)$ be an $M$-invariant polydiagonal subspace, $H \in \R^{k\times k}$, $f: \R \times \R^k \to \R^k$, and
$F: \R \times (\R^k)^n \to (\R^k)^n$ be defined by
\begin{equation}
\label{Fofx}
F(s, x)_i := f(s, x_i) + H \sum_{j=1}^n M_{i,j} x_j .
\end{equation}
Assume that $f$ is odd or $B$ is a synchrony basis matrix.
The function $F_B: \R \times (\R^k)^d \to (\R^k)^d$ defined by $F_B(s, y) = B^+ F(s, By)$ satisfies
\begin{equation}
\label{FBofw}
 F_B(s, y)_\ell = f(s, y_\ell) + H \sum_{m=1}^d(B^+MB)_{\ell, m} y_m .
\end{equation}
\end{prop}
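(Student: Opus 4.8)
The plan is to exploit the additive structure of $F$ together with the linearity of the pseudoinverse $B^+$. I would write $F(s,x) = P(s,x) + L(x)$, where $P(s,x)_i := f(s,x_i)$ is the internal part and $L(x)_i := H(Mx)_i = H\sum_{j} M_{i,j}x_j$ is the linear coupling part. Since $B^+$ acts linearly on the network index, the definition $F_B(s,y) = B^+F(s,By)$ splits as $F_B(s,y) = B^+P(s,By) + B^+L(By)$, and I would handle the two summands separately, matching them to the two terms of Equation~(\ref{FBofw}).

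For the internal term I would fix $s$ and apply Proposition~\ref{uncoupled} to the map $x \mapsto (f(s,x_i))_i$. The only point to verify is that the hypothesis transfers: here \emph{odd} should be read as oddness in the spatial argument, $f(s,-x_i) = -f(s,x_i)$, so that for each fixed $s$ the function $f(s,\cdot)$ is odd exactly when $f$ is, while the synchrony-matrix case requires no oddness at all. Proposition~\ref{uncoupled} then gives $(B^+P(s,By))_\ell = f(s,y_\ell)$, which is precisely the first term of~(\ref{FBofw}).

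For the coupling term the computation is purely algebraic and rests on the observation that $H$ acts on the fiber $\R^k$, whereas $B^+$, $M$, and $B$ act by \emph{real scalars} on the network index; hence $H$ commutes with all the index sums. Expanding and then substituting $(By)_j = \sum_m B_{j,m}y_m$,
\[
(B^+L(By))_\ell = \sum_{i} B^+_{\ell,i}\,H(MBy)_i = H\sum_{i,j} B^+_{\ell,i}M_{i,j}(By)_j = H\sum_{m=1}^{d}(B^+MB)_{\ell,m}\,y_m ,
\]
where the last equality recognizes the triple sum as the $(\ell,m)$ entry of $B^+MB$. Adding the internal and coupling contributions yields exactly~(\ref{FBofw}).

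The main thing to be careful about --- more a matter of bookkeeping than a genuine obstacle --- is the tensor structure: each $F(s,By)_i$ lives in $\R^k$, so I must justify pulling the matrix $H$ through the scalar index sums, which is where the identification of $(\R^k)^{n\times 1}$ with $(\R^k)^n$ and the commuting of fiber and index operations are used. I would also remark that the $M$-invariance of $\col(B)$ is not needed for the algebraic identity itself; rather, it guarantees that $W_B$ is $F$-invariant (equivalently $BB^+F(s,By) = F(s,By)$), so that $F_B$ genuinely describes the restricted dynamics and the reduced $d$-cell system is meaningful.
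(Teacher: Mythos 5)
Your proposal is correct and follows essentially the same route as the paper's proof: the paper likewise invokes Proposition~\ref{uncoupled} for the internal term (noting the parameter $s$ is harmless) and then evaluates the coupling term by the same triple-sum computation, pulling $H$ through the index sums to recognize $(B^+MB)_{\ell,m}$. Your closing observation that $M$-invariance of $\col(B)$ is needed only for $F$-invariance of $W_B$, not for the algebraic identity itself, is accurate and a nice clarification, but it does not change the argument.
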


\begin{proof}
The internal dynamics $f$ is handled in Proposition~\ref{uncoupled}, and the inclusion of the parameter $s$ in both $f$ and $F$ 
has no effect on the result.
Recall that $x = By$ means that $x_j = (By)_j = \sum_{m=1}^d B_{j,m} y_m$.
Thus,
\begin{align*}
F_B(s, y)_\ell 
&= \sum_{i = 1}^n B^+_{\ell,i}  F(s,By)_i \\
&= f(s, y_\ell) + \sum_{i = 1}^n B^+_{\ell,i} \left ( H \sum_{j=1}^n M_{i,j} (By)_j \right ) \\
&= f(s, y_\ell) + H\sum_{i = 1}^n \sum_{j = 1}^n \sum_{m= 1}^d B^+_{\ell,i} M_{i,j} B_{j, m} y_m \\
&= f(s, y_\ell) + H \sum_{m= 1}^d (B^+MB)_{\ell,m} y_m.
\end{align*}
\end{proof}

Note the similarity of Equations~(\ref{Fofx}) and (\ref{FBofw}). 
The internal dynamics $f:\R \times \R^k\to \R^k$ is the same in both equations, 
but $M \in \R^{n\times n}$ 
in Equation~(\ref{Fofx}) is replaced by
$B^+MB \in \R^{d\times d}$ in Equation~(\ref{FBofw}).  
We interpret $M$ as the adjacency matrix of a weighted digraph with $n$ vertices,
and $B^+MB$ is the adjacency matrix of the weighted quotient digraph with $d$ vertices.
See Figure~\ref{quotientWeightedDigraph}.
Note that a graph (which is by definition unweighted, undirected, and has no loops) can have 
a weighted quotient digraph which is weighted, directed, and has loops.

While the proofs of Propositions~\ref{uncoupled} and \ref{linearlyCoupled} are complicated, 
the result is obvious when the
$n$ component equations of $F(s, x)$ are restricted to a polydiagonal subspace $W_B$.  It is clear that
only $d = \dim(W_B)$ equations are essential and the rest hold automatically, 
as demonstrated in the next example.
\begin{example}
\label{quotientExample}
Let
$$
M = \begin{bmatrix}
0&1&1&1\\
1&0&0&-1\\
1&0&0&1\\
1&-1&1&0
\end{bmatrix},
\text{  and }
B = 
 \begin{bmatrix}
1&0\\
0&1\\
0&0\\
-1&0
\end{bmatrix},
\text{  so }
MB = 
 \begin{bmatrix}
-1&1\\
2&0\\
0&0\\
1&-1
\end{bmatrix}
$$
Note that $\col(B) = \col(MB)$, so $W_B$
is $M$-invariant.
System~(\ref{coupledCells}) is $\dot x = F(s, x)$, where
\begin{align*}
F(s, x)_1&= f(s, x_1) + H(x_2 + x_3 + x_4) \\
F(s, x)_2 &= f(s, x_2) + H(x_1 - x_4)\\
F(s, x)_3 &= f(s, x_3) + H(x_1 + x_4)\\
F(s, x)_4 &= f(s, x_4) + H(x_1 - x_2 + x_3).
\end{align*}
Assume $f$ is odd. It follows that $W_B$ is an $F$-invariant subspace, and
the ODE can be restricted to $W_B$ by setting $x_3 = 0$, and $x_4 = -x_1$.
We find that $F(s, x)_4 = -F(s, x)_1$ and $F(s, x)_3 = 0$, so only the first and second components are needed.
Proposition~\ref{linearlyCoupled} formalizes this procedure.  
The pseudoinverse of $B$ is computed in Example~\ref{Bplus},
which yields the adjacency matrix
$$
B^+MB = 
\begin{bmatrix}
-1&1\\
2&0
\end{bmatrix}  
$$
of the weighted quotient digraph.
System~(\ref{coupledCells}) restricted to $W_B$ is $\dot y = F_B(s, y)$, where
\begin{align*}
F_B(s, y)_1&= f(s, y_1) + H(-y_1 + y_2) \\
F_B(s, y)_2 &= f(s, y_2) + H(2 y_1).
\end{align*} 
This example demonstrates that a weighted digraph with a symmetric adjacency matrix
can give rise to a weighted quotient digraph with a non-symmetric adjacency matrix,
as shown in Figure~\ref{quotientWeightedDigraph}.
\end{example}

\begin{figure}
\begin{tabular}{ccc}
\includegraphics[scale=1.4]{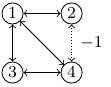}
&
\includegraphics[scale=1.4]{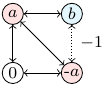}
&
\includegraphics[scale=1.4]{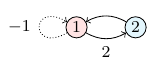}
\\
(a) & (b) & (c)
\end{tabular}

\caption{
\label{quotientWeightedDigraph}
For the matrices $M$ and $B$ defined in Example~\ref{quotientExample}, this figure shows
(a) the weighted digraph whose adjacency matrix is $M$,
(b) a typical element of the $F$-invariant subspace $W_B$, and (c) the weighted quotient digraph with
the adjacency matrix $B^+MB$.
The weights of the arrows in the digraphs are all 1 unless indicated.
}
\end{figure}

We have investigated System~\eqref{coupledCells} for all of the connected graphs with 4 or fewer vertices, with $M$ set to the graph Laplacian matrix. Among these examples only the diamond graph has a bifurcation that is BLIS but neither BSE nor EBL.

\begin{example}
\label{ex:diamond}
Let $G$ be the 4-vertex diamond graph, shown 
in
Figure~\ref{fig:diamondHasse}.  
The graph Laplacian matrix of $G$ is
\begin{equation}
\label{diamondL}
L = 
\begin{bmatrix}
\phantom{-} 2 & -1 & \phantom{-} 0 & -1\\
-1 & \phantom{-} 3 & -1 & -1\\
\phantom{-} 0 & -1 & \phantom{-} 2 & -1 \\
-1 & -1 & -1 & \phantom{-} 3
\end{bmatrix}.
\end{equation}
The lattice of $L$-invariant subspaces are shown in Figure~\ref{fig:diamondHasse}.
\begin{figure}
\includegraphics{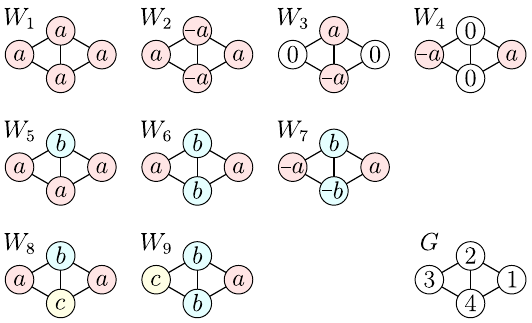}
\quad
\includegraphics{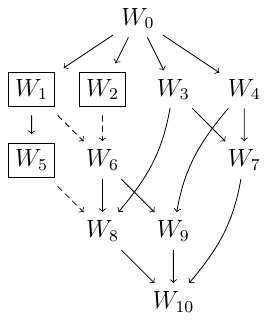}
\caption{
\label{fig:diamondHasse}
The $L$-invariant subspaces for the Laplacian matrix $L$ of the diamond graph $G$.
A typical element of each invariant subspace
is shown on the left, with the exception of $W_0 = \{ 0\} \subseteq \R^4$,
$W_{10} = \R^4$,
 and a reflection of $W_5$.  
The lattice of 
group orbits of
$L$-invariant subspaces
is shown on the right.
The boxes
indicate invariant subspaces that are not fixed point subspaces of the $\aut(G) \times \Z_2$ action.
The dashed arrows connect invariant subspaces with the same point stabilizer.
}
\end{figure}

Assume that the state space of each cell has dimension $k = 1$,
and the internal dynamics is $f(s, x_i) = s x_i + x_i^3$.  We set 
$M = L$, and $H  = [-1] \equiv -1$.
 Note that we identify the $1\times 1$ matrix $[-1]$ with a real number.
Thus,
System~(\ref{coupledCells}) becomes
\begin{equation}
\label{eqn:Fdiamond}
F_i(s, x) = s x_i + x_i^3  - (Lx)_i, \quad i \in \{1, 2, 3, 4\}.
\end{equation}
Thus we have a function $F: \R \times \R^4 \to \R^4$ whose component functions are
Equation~(\ref{eqn:Fdiamond}).
We search for solutions to $F(s, x) = 0$, and we want to understand the bifurcations.
As proved in \cite{NiSS}, the $F$-invariant subspaces are the $L$-invariant subspaces.
These are
shown in 
Figure~\ref{fig:diamondHasse} .

The eigenvalues of $L$ are $\lam = 0$ with eigenvector $(1,1,1,1)$, $\lam = 2$ with eigenvector $(1,0,-1,0)$,
and $\lam = 4$ with an eigenspace spanned by $(1,-1,1,-1)$ and $(0,1,0,-1)$.

The vector field $F$ is $\aut(G) \times \Z_2$-equivariant, where $\aut(G) = \langle (1\, 3), (2 \, 4) \rangle \cong \Z_2 \times \Z_2$ is the symmetry group of the graph $G$.  The extra $\Z_2$
symmetry holds because $F(s, -x) = - F(s, x)$, that is, $F$ is odd.  
The vector field $F$ describes a coupled cell network which has more structure than symmetry alone.  
For example, the multiplicity 2 eigenvalue $\lam = 4$ is not caused by symmetry 
since all of the irreducible representations of $\Z_2 \times \Z_2 \times \Z_2$ are one-dimensional 
\cite{NSS3}.

As described in Section~\ref{sec:Implementation}, 
we have written a MATLAB$\textsuperscript{\textregistered}$ program that computes the bifurcation diagrams of a large class of 
coupled cell networks systems. 
We solve Equation~\eqref{eqn:Fdiamond} on the diamond graph 
with the results shown in Figure~\ref{fig:diamondBifDiagram}.
Our program computes one element in each group orbit of branches that is connected to the trivial branch,
by recursively following daughter branches of BLIS bifurcations. 
The bifurcation points are indicated by the circles.  
If branches cross in this view but there is no circle, 
then the branches do not cross in $\R \times \R^4$ and there is not a bifurcation. 
We focus on the three highlighted
bifurcation points of Figure~\ref{fig:diamondBifDiagram} in the next three examples.
\end{example}

\begin{figure}
\begin{tikzpicture}
\node at (0,0) {\includegraphics[scale=.5]{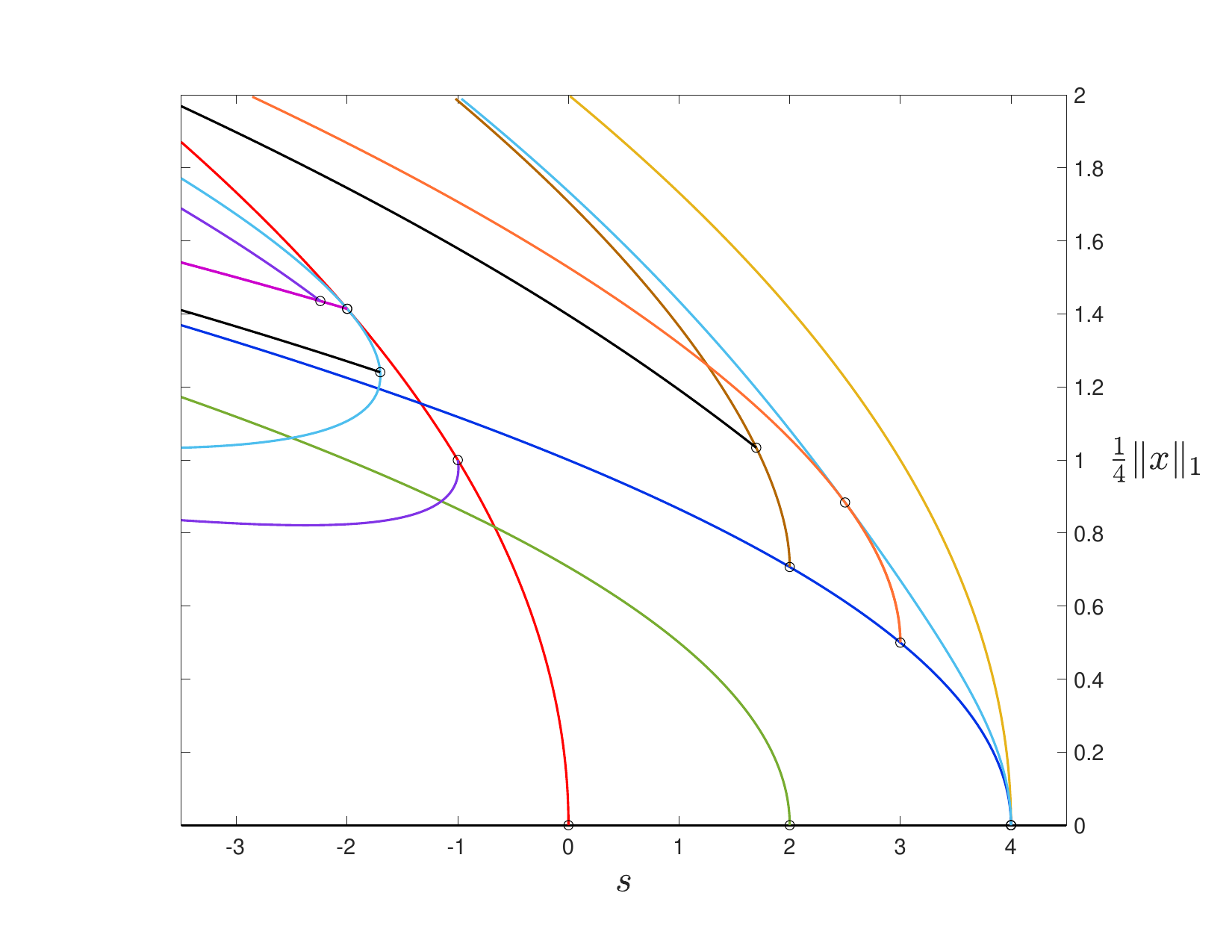}};
\node at (-5.43,4.3) {\small$W_{10}$};
\node at (-5.5,3.88) {\small$W_{1}$};
\node at (-5.5,3.47) {\small$W_{5}$};
\node at (-5.5,3.1) {\small$W_{9}$};
\node at (-5.5,2.51) {\small$W_{6}$};
\node at (-5.43,2.07) {\small$W_{10}$};
\node at (-5.5,1.65) {\small$W_{3}$};
\node at (-5.5,0.97) {\small$W_{4}$};
\node at (-5.5,.39) {\small$W_{5}$};
\node at (-5.5,-.4) {\small$W_{9}$};
\node at (-5.5,-3.9) {\small$W_0$};
\node at (-4.4,4.6) {\small$W_8$};
\node at (-2.3,4.6) {\small$W_7$};
\node at (-1.7,4.6) {\small$W_5$};
\node at (-0.6,4.6) {\small$W_2$};
\node at (-2.6,2.05) {\tiny\ref{W1toW5andW6}};
\node at (2.9,0.0) {\tiny\ref{W5toW8}};
\node at (3.9,-3.7) {\tiny\ref{W0s4}};
\end{tikzpicture}

\caption{
\label{fig:diamondBifDiagram}
The complete bifurcation diagram of Equation~(\ref{eqn:Fdiamond}).
The invariant subspace of each branch is indicated.  The invariant subspaces are described in
Figure~\ref{fig:diamondHasse}.
This figure shows the bifurcation points that spawn BLIS branches that are neither BSE nor EBL branches,
 at $s = -2$ (Example~\ref{W1toW5andW6}), $s = 2.5$ (Example~\ref{W5toW8}), and $s = 4$ (Example~\ref{W0s4}).
}
\end{figure}

\begin{example}
\label{W1toW5andW6}
\begin{figure}
\begin{tikzpicture}
\node at (0,0) {\includegraphics[scale=.5]{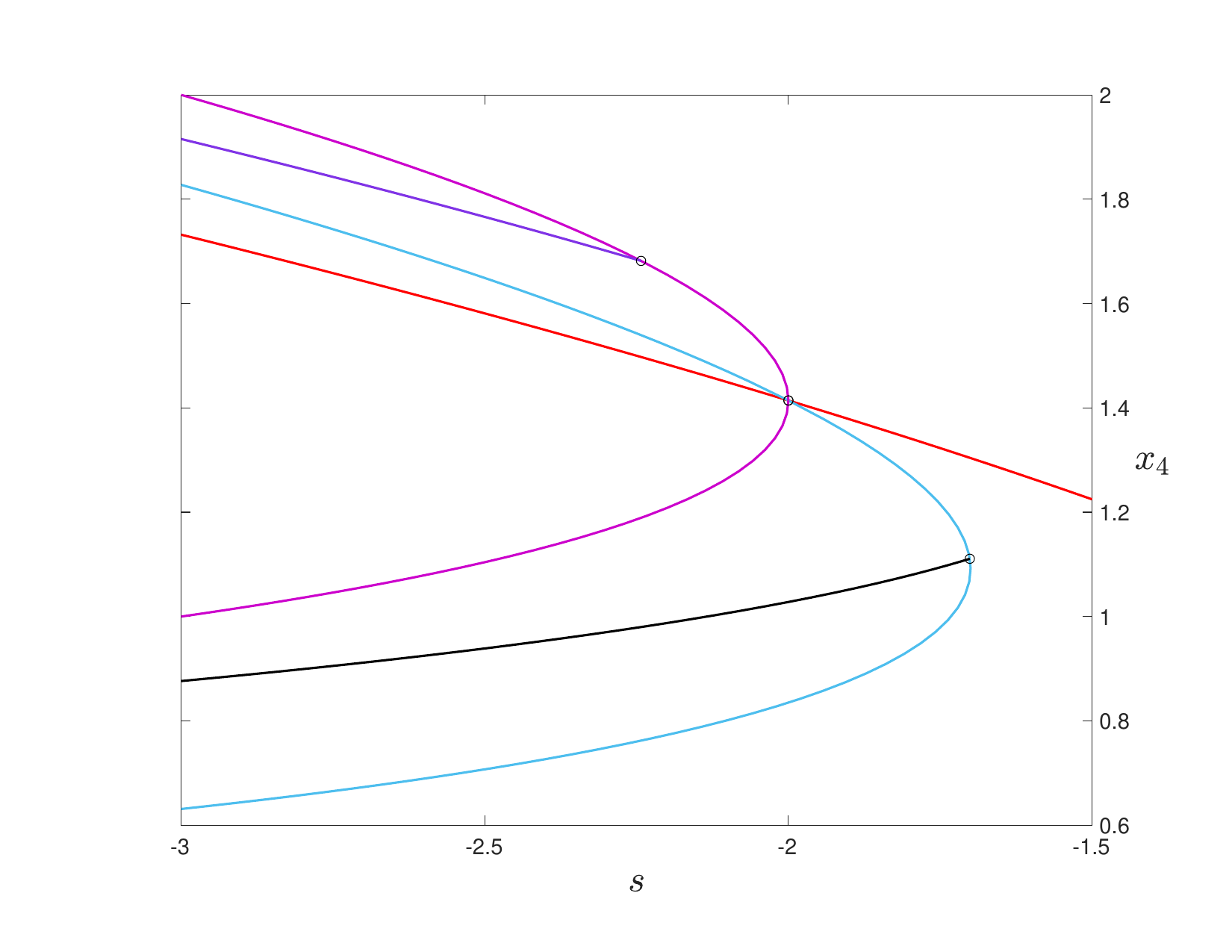}};
\node at (-5.4,4.45) {\small$W_{6}$};
\node at (-5.4,3.9) {\small$W_{9}$};
\node at (-5.4,3.4) {\small$W_{5}$};
\node at (-5.4,2.8) {\small$W_{1}$};
\node at (-5.4,-1.6) {\small$W_{6}$};
\node at (-5.33,-2.3) {\small$W_{10}$};
\node at (-5.4,-3.8) {\small$W_5$};
\end{tikzpicture}
\caption{
\label{fig:W1BLIS}
BLIS bifurcations $W_1 \to W_5$ and $W_1 \to W_6$ at $s = -2$, 
listed as Example~\ref{W1toW5andW6}(a) in Figure~\ref{VennDiagram}.  
There are also two pitchfork bifurcations shown that are BSE, EBL, and BLIS: $W_6 \to W_9$ and $W_5 \to W_{10}$,
which are listed as Example~\ref{W1toW5andW6}(b) in Figure~\ref{VennDiagram}.
Only one of the two branches is plotted in each case to avoid clutter. 
Note that this figure plots a different function of $x \in \R^4$ vs. $s$ than the one plotted
in Figure~\ref{fig:diamondBifDiagram}.  
Plotting $x_4$ shows that only one of the $W_6$ branches has a bifurcation.
See Example~\ref{W1toW5andW6}.  
}
\end{figure}
 Continuing the example of the diamond graph, the solution branch in $W_1$ that bifurcates from the origin at $s = 0$ undergoes two secondary bifurcations,
 at $s = -1$ and $s = -2$, as seen in Figure~\ref{fig:diamondBifDiagram}.  In this example,  we focus on the 
 secondary bifurcation at $s = -2$, and the tertiary bifurcations of these daughters, 
 as seen in the detailed bifurcation diagram
 in Figure~\ref{fig:W1BLIS}.  
 The bifurcation of the $W_1$ branch at $s = -2$ is 
 a BLIS bifurcation 
 that is neither an EBL nor a BSE bifurcation,
 since $W_1$ is not a fixed-point subspace, and
 the critical eigenspace is not one-dimensional.
 This bifurcation is also described by~\cite[Theorem 8.2]{FranciGolubitskyStewart}.
It is indicated in Figure~\ref{VennDiagram} as \ref{W1toW5andW6}(a).  
 The daughters are two secondary branches in $W_5$ and $W_6$, and each of these branches
 undergoes a tertiary bifurcation which is a standard pitchfork bifurcation
 described by the BSE, EBL and BLIS.  These pitchfork bifurcations are indicated in Figure~\ref{VennDiagram} as \ref{W1toW5andW6}(b).

 The synchrony basis matrix for $W_1$ is $B_1 := [1 \, 1\, 1 \, 1]^T$ with pseudoinverse 
 $B_1^+ = \frac 1 4 [1 \, 1\, 1 \, 1]$.  Thus $B_1^+LB_1 = 0$ for the Laplacian matrix (\ref{diamondL}), 
 and 
 Proposition~\ref{linearlyCoupled} 
  says that $F(s, x) = 0$ restricted to $W_1$ is
 $$
 F_{B_1}(s, y) = s y + y^3 = 0.
 $$
 This has three solutions, $y = 0$ and $y = \pm\sqrt{-s}$. 
 The solution $y=0$ is in $W_0$.  
The latter solutions are in $W_1$ but not in $W_0$.
For $y = \sqrt{-s}$, the mother branch function 
 $b_m: \R \to \R^4$ is $b_m(s) = \sqrt{-s}(1,1,1,1)$.
 We find that $D_2 F_{B_1}(s, y) = s + 3 y^2$, and thus $D_2 F_{B_1}(s, b_m(s)) = -2s$.
 This eigenvalue of this matrix does not change sign, and Proposition~\ref{prop:motherExists} applies.
 The eigenvalues of $D_2F(s, b_m(s))$ are $-2s$, $1-s$, and $2-s$ (with multiplicity 2).  We focus on the two BLIS bifurcations at $s = -2$ which
have daughter branches in $W_5$ and $W_6$, respectively.

The synchrony basis matrix of $W_5$ is $B_5 := \left[ \begin{smallmatrix} 1 & 0 & 1 &1 \\ 0& 1&0& 0\end{smallmatrix} \right]^T$, and $B_5^+ L B_5= \left[ \begin{smallmatrix} 1 & -1  \\ -3& 3 \end{smallmatrix} \right]$.
Note that $B_5^+ L B_5$ is the Laplacian matrix of the quotient digraph, which has an arrow with weight 1 from cell 2 to cell 1, and an arrow with weight 3 from cell 1 to cell 2.
By Proposition~\ref{linearlyCoupled}, the system $F(s, x) = 0$ restricted to $W_5$ is
\begin{equation}
\label{W5system}
F_{B_5}(s,y) = (sy_1 + y_1^3 - (y_1-y_2) , s y_2 + y_2^3 -3 (y_2-y_1) )= 0.
\end{equation}
The Jacobian matrix of $F_{B_5}$ is
$$
D_2 F_{B_5}(s, y) =  \begin{bmatrix} s + 3 y_1^2 - 1 & 1 \\ 3 & s + 3y_2^2 - 3 \end{bmatrix}, 
$$
and the Jacobian matrix evaluated at the mother branch is
$$
J(s) := D_2 F_{B_5}(s, b_m(s)) = \begin{bmatrix} -2s - 1 & 1 \\ 3 & -2s  - 3\end{bmatrix}.
$$
The eigenvalues of $J(s)$ are $-2s$ and $-2(s+2)$.  Thus there is a bifurcation at $s^* = 2$ and the two components of Theorem~\ref{thm:BLIS} are
$$
J(-2) = \begin{bmatrix} 3 & 1 \\ 3 & 1\end{bmatrix}, \quad J'(-2) = \begin{bmatrix} -2 & 0 \\ 0 & -2\end{bmatrix}.
$$
We see that $N(J(-2)) = \spn (\{ (1, -3)\})$ is one-dimensional, and the $J'(-2)(1, -3) = (-2, 6)$ is not in $R(J(-2)) = \spn (\{ (1, 1)\})$, 
so the hypotheses of Theorem~\ref{thm:BLIS} hold.
There is exactly one daughter branch in $W_5$ bifurcating from the mother branch at $s = -2$. 

Similarly, the synchrony basis matrix of $W_6$ is $B_6 := \left[ \begin{smallmatrix} 1 & 0 & 1 &0 \\ 0& 1&0& 1\end{smallmatrix} \right]^T$.
The system $F(s, x) = 0$ restricted to $W_6$ is
\begin{equation}
\label{W6system}
F_{B_6}(s,y) = (sy_1 + y_1^3 - 2(y_1-y_2) , s y_2 + y_2^3 -2 (y_2-y_1) ) = 0.
\end{equation}
The Jacobian matrix of $F_{B_5}$ is
$$
D_2 F_{B_6}(s, y) =  \begin{bmatrix} s + 3 y_1^2 - 2 & 2 \\ 2 & s + 3y_2^2 - 2 \end{bmatrix}, 
$$
and the Jacobian matrix evaluated at the mother branch is
$$
J(s) := D_2 F_{B_6}(s, b_m(s)) = \begin{bmatrix} -2s - 2 & 2 \\ 2 & -2s  -2\end{bmatrix}.
$$
The eigenvalues of $J(s)$ are $-2s$ and $-2(s+2)$.  Thus there is a bifurcation at $s^* = 2$ and the two components of Theorem~\ref{thm:BLIS} are
$$
J(-2) = \begin{bmatrix} 2 & 2 \\ 2 & 2\end{bmatrix}, \quad J'(-2) = \begin{bmatrix} -2 & 0 \\ 0 & -2\end{bmatrix}.
$$
We see that $N(J(-2)) = \spn( \{ (1, -1)\})$ is one-dimensional, and the $J'(-2)(1, -1) = (-2, 2)$ is not in $R(J(-2)) = \spn( \{ (1, 1)\})$, 
so the hypotheses of Theorem~\ref{thm:BLIS} hold.
There is exactly one daughter branch in $W_6$ bifurcating from the mother branch at $s = -2$. 

Figure~\ref{fig:W1BLIS} shows these two bifurcations.  Note that the restricted system (\ref{W6system}) has a symmetry $(y_1, y_2) \mapsto (y_2, y_1)$
and consequently the bifurcation $W_1 \to W_6$ is a pitchfork bifurcation within the restricted system
(\ref{W6system}).  Within the full system in $\R \times \R^4$
there is no such symmetry, so our MATLAB$\textsuperscript{\textregistered}$ code follows both branches.
The lower $W_6$ branch in Figure~\ref{fig:W1BLIS} undergoes a bifurcation at $s \approx -2.4$, whereas the upper
branch has no bifurcation.  There is apparently no symmetry of the restricted system (\ref{W5system}), and the bifurcation $W_1 \to W_5$ is transcritical as expected.

%
\end{example}

\begin{example}
\label{W0s4}
The trivial branch in System~\eqref{eqn:Fdiamond} for the diamond graph has a double 0 eigenvalue at $s = 4$.
There is not a BSE at $(s, x) = (4, 0)$ in Figure~\ref{fig:diamondBifDiagram}.
The irreducible representations of $\aut(G) \times \Z_2$ are all one-dimensional, 
so there is not an EBL bifurcation at that point.  
However, there are 3 BLIS branches that bifurcate from this point.
The critical eigenspace of $D_2F(4,0)$ is the eigenspace of $L$ with eigenvalue $\lam = 4$,
 which is $E := \spn(\{(1,-1,1,-1), (0, 1, 0, -1)\})$.  
 We can use a shortcut to avoid calculations like those in Example~\ref{W1toW5andW6},
 following Remark~\ref{rem:BLISremarks} (1) and (5).
 For each of the invariant subspaces $W_i$, 
 there is a BLIS bifurcation from $W_0$ to $W_i$ if the intersection $E \cap W_i$ is one-dimensional.
Consulting Figure~\ref{fig:diamondHasse}, we see that $E \cap W_2 = W_2$,  $E \cap W_3 = W_3$, 
and $E \cap W_5 = \spn(\{(1,-3,1,1)\})$ are all one-dimensional, and $E\cap W_1 = E \cap W_4 = \{0\}$.  
It is the case that $E\cap W_6 = W_2$ is one-dimensional, so the BLIS theorem says that 
there is a daughter branch in $W_6$, but this branch is also in $W_2$.  
Thus, we do not need to consider the invariant subspaces 
that contain $W_2$, $W_3$, and $W_5$ and we have found all the BLIS bifurcations: $W_0 \to W_2$, 
$W_0 \to W_3$, $W_0 \to W_5$.  
These three bifurcating branches are seen in Figure~\ref{fig:diamondBifDiagram}.
\end{example}

\begin{example}
\label{W5toW8}
The BLIS bifurcation of the mother branch $W_5$ to the daughter branch $W_8$ at $s = 2.5$ is a BSE, but not an EBL bifurcation.  
For this bifurcation, we will not verify Condition~(b) in Theorem~\ref{thm:BLIS}
which is generically true, but focus on Condition~(a).  
The system $F(s, x) = 0$ restricted to $W_5$ was shown in Equation~(\ref{W5system}).
We cannot easily solve that system to find the mother branch function $b_m(s)$ in closed form, 
but our MATLAB$\textsuperscript{\textregistered}$ program finds numerical solutions.  The program also gives a numerical approximation 
of the bifurcation point, and
we can verify by hand that $F_{B_5}(\frac 5 2, (\frac 1 {\sqrt{2}}, - \sqrt{2}) ) = (0,0)$.
Furthermore, $D_2F_{B_5}(\frac 5 2, (\frac{1}{\sqrt{2}}, - \sqrt{2}) )$ is nonsingular, 
so the hypotheses of Proposition~\ref{prop:motherExists} hold.
The system $F(s, x) = 0$ restricted to $W_8$ is $F_{B_8}(s, y) = 0$, where
$$
F_{B_8}(s,y) = (sy_1 + y_1^3 - (2y_1-y_2-y_3) , s y_2 + y_2^3 -(3y_2-2y_1-y_3), sy_3 + y_3^3 - (3y_3 -2y_1-y_2)).
$$
The Jacobian matrix of $F_{B_8}$ is
$$
D_2 F_{B_8}(s, y) =  \begin{bmatrix} s + 3 y_1^2 - 2 & 1 & 1 \\ 2 & s + 3y_2^2 - 3 &1 \\ 2 & 1 & s + 3 y_3^2 -3 \end{bmatrix}.
$$
While we cannot find the mother branch function $b_m$, or $J(s) = D_2 F_{B_8}(s, b_m(s))$, 
the Jacobian matrix evaluated at the bifurcation point 
is
$$
\textstyle J(\frac 5 2 ) = D_2 F_{B_8}(\frac 5 2,(\frac 1 {\sqrt{2}}, -\sqrt{2}, \frac 1 {\sqrt{2}}) = 
\begin{bmatrix} 2 & 1 & 1 \\ 2 & \frac{11} 2 &1 \\ 2 & 1 & 1 \end{bmatrix}.
$$
This matrix has a simple zero eigenvalue with an eigenvector of $(1, 0, -2)$.  Thus, Condition~(a) of
Theorem~\ref{thm:BLIS} is satisfied.  While we cannot prove that Condition~(b) in the theorem holds, 
it is true generically, and our MATLAB$\textsuperscript{\textregistered}$ program does not check it.  
The program follows the daughter branches in $W_8$, suggesting that Condition~(b) is true.
This BLIS bifurcation is also a BSE, since the $4 \times 4$ Jacobian matrix evaluated at the bifurcation point has
a simple 0 eigenvalue.  There is no symmetry, so this is not an EBL bifurcation.
\end{example}

\subsection{PDE Examples}
The BLIS applies to find primary and secondary bifurcations for many PDE. 
If all of the hypotheses of the BSE theorem are satisfied except that the critical
eigenspace has dimension greater than one, then one can look for one-dimensional invariant subspaces of the critical eigenspace and apply the BLIS there.  In this section we provide examples from the semilinear elliptic BVP investigated in 
\cite{NSS5,NS}.  
We consider primary bifurcations where the mother branch is the trivial branch defined by $b_m(s)=0$.
In these cases the existence of a daughter branch is proved.  
For secondary bifurcations, the mother branch function is not known explicitly and the bifurcation is 
observed numerically rather than proven to exist.
The existence of nested invariant subspaces can be proved, and suggests a robust numerical algorithm for
following secondary branches.

Consider the PDE
\begin{equation}
\label{PDE}
\Delta u + su + N(u) = 0
\end{equation}
for $u: \Omega \to \R$ with $0\,$-Dirichlet boundary conditions,
where $\Omega$
is a region in $\R^n$,
and $N:\R \to \R$ is a nonlinearity which satisfies $N(0) = N'(0) = 0$.
We seek zeros of the function 
$F:\R \times H \to H$ defined by 
$$F(s, u) = u + \Delta^{-1}(s u + N(u)), 
$$
where $H$ is the Sobolev space $H_0^{1,2}(\Omega)$.
We usually choose the subcritical, superlinear nonlinearity $N(u) = u^3$, as in \cite{NSS5, NS}.
In this case and others, regularity theory \cite{GT} gives that a zero of $F$ is twice 
differentiable and hence a classical solution to the PDE.

The eigenvalue equation associated with PDE~\eqref{PDE} is $\Delta u + \lambda u =0$, 
on the same region with $0\,$-Dirichlet boundary conditions.  
It is well-known \cite{gilbarg2001elliptic} that the eigenvalues are
real and satisfy $0<\lam_1 < \lam_2 \leq \lam_3 \cdots \to \infty$ .
All BSE, EBL, and BLIS bifurcations from the trivial solution are described by Theorem~\ref{thm:BLIS}
with $W_m = \{0\}$. 
For the current function $F$, 
the operator $J(s) = D_2F(s,0)$ defined in Theorem~\ref{thm:BLIS} 
has eigenvalues $\{1-s/\lam_i \mid i \in \N\}$.
We expect that the branch of trivial solutions has a bifurcation of some sort at $(s, u) = (\lam_i, 0)$.
Suppose we find an invariant subspace $W_d \subseteq H$ such that 
$N(J(\lambda_i)) = \spn(\{x_0\})$, in the notation of Theorem~\ref{thm:BLIS}.
We see that all of the conditions of the Theorem hold.  The second part of Condition (a) holds because $H$ is a Hilbert space and 
$\Delta^{-1}$ is self-adjoint. Thus, the range of $J(\lambda_i)$ is the orthogonal complement of the null space,
which has codimension one.
Condition~(b) holds because $J'(s) = \Delta^{-1}$, and therefore
$J'(\lambda_i) (x_0) = x_0/\lambda_i  \not \in R(J(\lambda_i))$.

\begin{example}
\label{transInPDE}
Consider PDE~\eqref{PDE}.
Since $\lam_1$ is simple, 
the BSE and the BLIS apply at the point $(s, u) = (\lam_1, 0)$.
There is a unique branch of nontrivial solutions that bifurcates from this point.
An explicit example is the PDE
$$
\Delta u + s u + u^2  = 0,
$$
for which there is a transcritical bifurcation, and
for which the EBL does not apply.  Note the similarity to Example~\ref{transInR}.
\end{example}

\begin{example}
\label{pitchforkInPDE}
Consider PDE \eqref{PDE} with $N$ odd.  Then $F(s, -u) = -F(s, u)$, and the bifurcation at
$(s, u) = (\lam_1, 0)$ is a pitchfork bifurcation.
In this case, the BSE, EBL, and BLIS all apply.
An explicit example is the PDE
$$
\Delta u + s u + u^3  = 0.
$$
Note the similarity to Example~\ref{pitchforkInR}.
\end{example}

\begin{example}
\label{lam12square}
Consider PDE \eqref{PDE} with the square domain, $\Omega =(0,1)^2$.
The eigenvalues and eigenfunctions of the Laplacian can be explicitly computed as
$$\psi_{n,m}(x, y) = \sin(n \pi x) \sin(m \pi y), \quad 
\lam_{n,m} = (n^2 + m^2) \pi^2,$$
with $n, m \in \N$.
Note that $0 < \lam_{1,1} < \lam_{1,2} = \lam_{2,1}$.
The EBL applies to the bifurcation at $(s, u) = (\lam_{1,2}, 0)$, but BSE does not apply.
The group $\aut (\Omega)$ is the symmetry of the square, and that group acts on $H$.
The fixed point subspaces $\{u \in H: u(x,y) = u(y,x)\} $ and 
$\{u \in H: u(x,y) = u(1-x,y)\}$ are $F$-invariant and the BLIS applies.
\end{example}

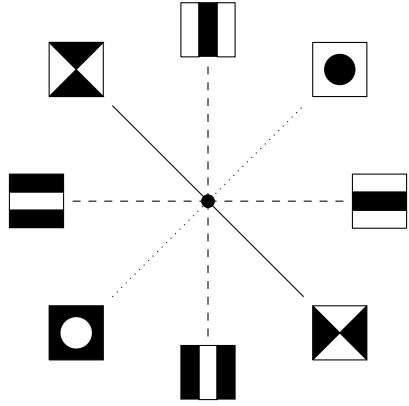
\begin{figure}
\begin{center}
\begin{tikzpicture}[scale =1.2]

\newcommand {\oo} {.44}

\draw[mark=*] plot coordinates{(1.5,1.5)};

\draw[fill=black] (1.2,-.7)--(1.2,-.1)--(1.4,-.1)--(1.4,-.7);
\draw[-] (1.4,-.7)--(1.4,-.1)--(1.6,-.1)--(1.6,-.7)--(1.4,-.7);
\draw[fill=black] (1.6,-.7)--(1.6,-.1)--(1.8,-.1)--(1.8,-.7);

\draw[-] (1.2,3.1)--(1.2,3.7)--(1.4,3.7)--(1.4,3.1)--(1.2,3.1);
\draw[fill=black] (1.4,3.1)--(1.4,3.7)--(1.6,3.7)--(1.6,3.1);
\draw[-] (1.6,3.1)--(1.6,3.7)--(1.8,3.7)--(1.8,3.1)--(1.6,3.1);

\draw[fill=black] (-.7,1.2)--(-.7,1.4)--(-.1,1.4)--(-.1,1.2);
\draw[-] (-.7,1.4)--(-.7,1.6)--(-.1,1.6)--(-.1,1.4)--(-.7,1.4);
\draw[fill=black] (-.7,1.6)--(-.7,1.8)--(-.1,1.8)--(-.1,1.6);

\draw[-] (3.7,1.2)--(3.7,1.4)--(3.1,1.4)--(3.1,1.2)--(3.7,1.2);
\draw[fill=black] (3.7,1.4)--(3.7,1.6)--(3.1,1.6)--(3.1,1.4)--(3.7,1.4);
\draw[-] (3.7,1.6)--(3.7,1.8)--(3.1,1.8)--(3.1,1.6)--(3.7,1.6);

\draw[-] (-.7+\oo,3.1-\oo)--(-.7+\oo,3.7-\oo)--(-.1+\oo,3.7-\oo)--(-.1+\oo,3.1-\oo)--(-.7+\oo,3.1-\oo);
\draw[fill=black] (-.7+\oo,3.1-\oo)--(-.4+\oo,3.4-\oo)--(-.1+\oo,3.1-\oo)--(-.7+\oo,3.1-\oo);
\draw[fill=black] (-.7+\oo,3.7-\oo)--(-.4+\oo,3.4-\oo)--(-.1+\oo,3.7-\oo)--(-.7+\oo,3.7-\oo);

\draw[-] (3.1-\oo,-.7+\oo)--(3.1-\oo,-.1+\oo)--(3.7-\oo,-.1+\oo)--(3.7-\oo,-.7+\oo)--(3.1-\oo,-.7+\oo);
\draw[fill=black] (3.1-\oo,-.7+\oo)--(3.4-\oo,-.4+\oo)--(3.1-\oo,-.1+\oo);
\draw[fill=black] (3.7-\oo,-.7+\oo)--(3.4-\oo,-.4+\oo)--(3.7-\oo,-.1+\oo);

\draw[-] (3.1-\oo,3.1-\oo)--(3.1-\oo,3.7-\oo)--(3.7-\oo,3.7-\oo)--(3.7-\oo,3.1-\oo)--(3.1-\oo,3.1-\oo);
\node at (3.4-\oo,3.4-\oo)   [circle,minimum size=.3cm,fill,draw,thick] {};

\draw[fill=black] (-.7+\oo,-.7+\oo)--(-.1+\oo,-.7+\oo)--(-.1+\oo,-.1+\oo)--(-.7+\oo,-.1+\oo)--(-.7+\oo,-.7+\oo);
\node at (-.4+\oo,-.4+\oo)   [circle,minimum size=.3cm,fill, color=white,draw,thick] {};

\draw [dotted] (.44,.44) -- (2.56,2.56 );
\draw [-] (2.56,.44 ) -- (.44,2.56 );
\draw [dashed] (1.5,0 ) -- (1.5,1.5 );
\draw [dashed] (1.5,1.5 ) -- (1.5,3 );
\draw [dashed] (3,1.5 ) -- (1.5,1.5 );
\draw [dashed] (0,1.5 ) -- (1.5,1.5 );

\end{tikzpicture}
\end{center}

\caption{
\label{psi13_bifs}
The critical eigenspace $V$ at the bifurcation point $(s^*,0)$ in Example~\ref{exa:lambda13}, where $s^*=10\pi^2$. 
The diagonal lines are intersections of two fixed point subspaces with $V$ and lead to EBL bifurcations. 
The horizontal dashed line is the intersection of $W_d$ with $V$ and leads to a BLIS bifurcation. 
The vertical dashed line leads to another similar BLIS bifurcation. 
The three line types correspond to the three group orbits of bifurcating solutions.
}
\end{figure}

\begin{example}
\label{exa:lambda13}
Consider bifurcation point $(s, u) = (\lam_{1,3}, 0)$ for 
PDE \eqref{PDE} with the square domain, $\Omega =(0,1)^2$, with eigenvalues and eigenfunction
listed in Example~\ref{lam12square}.
The eigenvalue $\lam_{1,3} = \lam_{3,1}$
is not simple so BSE does not apply.
The EBL predicts two group orbits of daughter branches, and the BLIS predicts these two  as well as
a third group orbit of bifurcating solutions.
These are shown in Figure~\ref{VennDiagram} as \ref{exa:lambda13}(a) and (b), respectively.

The bifurcation of the trivial solution at $s = \lam_{1,3}$ has a 2-dimensional critical eigenspace
$$
V = N(D_2F(\lam_{1,3}, 0)) = \spn(\{\psi_{1,3}, \psi_{3,1} \}).
$$
Figure~\ref{psi13_bifs} shows the eigenfunctions in $V$ that give rise to 8 solution branches, in three group orbits, that bifurcate from the trivial solution at $s = \lam_{3,1} = 10 \pi^2$. 
We count this as 8 solution branches following Remark~\ref{rem:BLISremarks}(7). 

The function $F$ is $\mathbb D_4 \times \Z_2$-equivariant, where the symmetry group $\mathbb D_4$ of the square acts on $H$ in the natural way and $\Z_2$ acts as $u \mapsto -u$. 
The $\mathbb D_4 \times \Z_2$ action on $V$, modulo the kernel of the action, is isomorphic to $\mathbb D_2$, and is generated by 
$(\psi_{1,3}, \psi_{3,1}) \mapsto (\psi_{3,1}, \psi_{1,3})$ and $(\psi_{1,3}, \psi_{3,1}) \mapsto -(\psi_{1,3}, \psi_{3,1})$.
The EBL says that  
$\spn(\{\psi_{1,3} + \psi_{3,1}\})$ and 
$\spn(\{\psi_{1,3} - \psi_{3,1}\})$, the diagonals in Figure~\ref{psi13_bifs}, give rise to branches in two fixed-point subspaces in $H$. The bifurcating solutions are either even or odd in each of the reflection lines of the square.

The BLIS shows that there is a bifurcating branch in $\R \times W_d$, where
\begin{align*}
W_d := \{u \in H \mid \  & u(x, 1-y) =  u(x,y)  \text{ for all } (x,y)\in (0,1)\times(0,1) \text{ and } \\
                         & u(x, \textstyle \frac 2 3 - y) = -u(x,y)  \text{ for all } (x,y)\in (0,1)\times(0,\textstyle\frac 2 3) \}.
\end{align*}
The intersection of $W_d$ with $V$ is $\spn(\{ \psi_{1,3}\} )$, depicted by the horizontal dashed line in Figure~\ref{psi13_bifs}. 
The stability of the trivial branch  is determined by $J(s) = D_2F_d(s, 0)$, which has a simple eigenvalue equal to $1-s/\lambda_{1,3}$ with the critical eigenvector $\psi_{1,3}$.
The symmetry of the square implies that there is another invariant subspace that 
intersects $V$ in $\spn(\{\psi_{3,1}\})$, leading to
another BLIS bifurcation.
See \cite{NS} for numerical results of these bifurcating branches, as well as other nearby primary and secondary solution branches.
\end{example}

In the next example, the numerically observed bifurcation point is explained by the BLIS, but is not proven to exist.
\begin{example}
\label{PDEsecondary}
The primary branch predicted by the BLIS in Example~\ref{exa:lambda13} has a secondary bifurcation, 
observed numerically and
shown in \cite[Figure 5]{NS}. 
This bifurcation is not an EBL bifurcation, but it is a BLIS bifurcation as we now explain.
The primary branch is in $W_d$ as implied by the BLIS, but is also observed to be in the invariant fixed point subspace
$$
W_{e,e} := 
\{u \in H \mid u(x, 1-y) =  u(1-x, y) = u(x,y)  \text{ for all } (x,y) \in (0,1)\times(0,1) \}
$$
of functions that are even in both the horizontal and vertical directions.
The secondary bifurcation shown in \cite[Figure 5]{NS} is a BLIS bifurcation with nested subspaces
$W_{e,e} \cap W_d \subsetneq W_{e,e}$.  This is also a BSE bifurcation, since the critical eigenspace is one-dimensional.
Schneider has a description of this type of bifurcation in terms of Fourier coefficients of $u$ in 
\cite{ Schneider2023, Schneider2022}.
\end{example}

The invariant subspace $W_d$ in Example~\ref{exa:lambda13}, based on $\psi_{3,1}$, is just one of an infinite family of invariant subspaces that are not fixed point subspaces based on 
$\psi_{n,m}$ for all cases where $n$ or $m$ is greater than 2.
Similar invariant subspaces are even more prominent for the PDE on the cube.

\begin{example}
\label{cube6D}
In \cite{NSS5} we investigate the PDE $\Delta u + s u + u^3 = 0$ on the cube $(0, \pi)^3$
with $0\,$-Dirichlet boundary conditions.
Here the eigenvalues of the Laplacian are $\lam_{\ell, m, n} = \ell^2 + m^2 + n^2$,
with eigenfunctions $\psi_{\ell, m,n}(x,y,z) = \sin(\ell x) \sin(my)\sin(nz)$.
There are 99 group orbits of fixed point subspaces of the symmetry group of the cube acting on the function space $H$.  These are called symmetry
types, denoted $S_0$ through $S_{98}$.
The paper features the bifurcation of the trivial branch at $s^* = \lam_{1,2,3} = 14$, 
for which $J(s^*)$ has a 6-dimensional critical eigenspace.
\begin{enumerate}
\item[(a)]
This bifurcation has 4 group orbits of EBL branches, indicated in \cite[Figures 23 and 25]{NSS5}.  
These have symmetry types $S_{11}$, $S_{12}$, $S_{22}$, and $S_{23}$.
\item[(b)]
There are also 2 additional BLIS branches that are not EBL branches.  
The invariant subspaces are spaces of functions
with odd reflection symmetry at $z = \pi/3$ and $z = 2 \pi/3$.  These are shown in \cite[Figure 25]{NSS5}
with symmetry type $S_{52}$, with critical eigenfunction $\psi_{1,2,3}$, and the third solution with symmetry type $S_{44}$, with critical eigenfunction $\psi_{1,2,3} + \psi_{2,1,3}$.  
Thus there are 6 group orbits of BLIS branches that bifurcate from $(s, u) = (14, 0)$.
\item[(c)]
In addition, there are 13 group orbits of solution branches that are not BLIS branches.
\end{enumerate}
The branches in Parts~(a) and (b) are simple to follow, since the critical eigenvector is determined by
Theorem~\ref{thm:BLIS}.  
Following BLIS branches is like shooting fish in a barrel.
In contrast, 
the 13 group orbits of non BLIS branches are found by a random search of the 6-dimensional critical eigenspace.
Many hours of computing time were required to find them, and it is possible that some eluded detection.
We are reasonably confident we have found all of the branches, with the help of index theory as
described in \cite{NSS5}.
\end{example}

\section{Numerical Implementation}
\label{sec:Implementation}

We wrote a MATLAB$\textsuperscript{\textregistered}$ program,
available at the GitHub repository~\cite{NSS8_companion}.
It approximates solutions to the system of $n$ equations
\begin{equation}
\label{eqn:original}
F(s, x)_i := f(s, x_i) - (Mx)_i = 0, \quad i \in \{1, 2, \ldots, n \}
\end{equation}
for the components of $x \in \R^n$.  
These are equilibrium solutions of Equation~\eqref{Fofx}, with $k =1$ (so $x_i \in \R$) and the $1 \times 1$ coupling matrix $H = -1$.
The input data for the program includes the weighted adjacency matrix
$M \in \R^{n\times n}$, the nonlinearity $f:\R^2 \rightarrow \R$,
the permutations that commute with $M$,
the $M$-invariant polydiagonal subspaces, and the containment lattice of the
$M$-invariant subspaces.

Figures~\ref{fig:diamondBifDiagram} and \ref{fig:W1BLIS} show bifurcation diagrams
with $M$ chosen to be the Laplacian matrix in Equation~\eqref{diamondL}. We made
text files which input the information shown in Figure~\ref{fig:diamondHasse}.
We chose
\begin{equation}
\label{eqn:standard_f}
f(s, x) = s x + x^3,
\end{equation}
to yield Equation~\eqref{eqn:Fdiamond}.
Other choices of $f$, predefined in our MATLAB$\textsuperscript{\textregistered}$ code,  are
$$
f(s, x) = s x + \alpha x^2 - x^3 \text{ and } f(s,x) = s x + x^3 - \beta x^5,
$$
where $\alpha$ and $\beta$ are parameters which cannot be scaled away.  We do {\em not} require that $f$ is odd in the second variable, but the program treats 
$f$ odd and $f$ not odd differently.  
The $M$-invariant synchrony subspaces are $F$-invariant for all $f$.
The $M$-invariant anti-synchrony subspaces are $F$-invariant if $f$ is odd (see \cite{NiSS}).

We frequently assume $f$ satisfies $f(s,0) = 0$, $D_1f(s, x) = x$, and $D_2f(s, 0) = s$ for all $s$, 
as in the above examples.
This has the advantage that the stability of the trivial branch is easy to compute.
For the trivial branch, with $b_m(s) = 0$ and $W_d = \R^n$,  
the stability matrix
$J(s) = s I_n-M$ is easily analyzed based on the eigenvalues of $M$ (see Equation~\eqref{eqn:Jac}).
Our program works for any smooth $f$, although if $f(s, 0) \neq 0$, then an approximate solution must be supplied to the program as a starting point for the first branch.
For example we have tested
$$
f(s, x) = (s^2 -1)x \pm x^3, \quad f(s, x) = s x + x^3 + 1.
$$

We now describe the general algorithms that, when implemented, 
can generate numerical bifurcation results 
for networks such as those displayed in Figures~\ref{fig:diamondBifDiagram} and~\ref{fig:W1BLIS}.  
There are four essential processes involved: 
\begin{enumerate}
\item Compute the lattice of invariant subspaces.
\item Follow branches, given a starting point, the invariant subspace, and initial direction.
\item Detect bifurcation points, which are the starting points of daughter branches.
\item Use BLIS to find the invariant subspace and initial direction of each daughter branch.
\end{enumerate}

For the first process, 
we start with the matrix $M$ in (\ref{eqn:original}) (typically the graph Laplacian). 
The algorithm described in \cite{NiSS} and implemented on that article's GitHub repository \cite{NiSS_companion}
finds all of the $M$-invariant polydiagonal subspaces 
and the lattice showing which subspaces are contained in which others.
For example, the information in Figure~\ref{fig:diamondHasse} can be computed with this program.

The weighted analytic matrix $M$, invariant subspaces, lattice of inclusion, and $\aut(G)$ information is passed 
to the MATLAB$\textsuperscript{\textregistered}$ program by four text files, as described in the user's 
manual available at \cite{NSS8_companion}.

Given these four text files as inputs, the MATLAB$\textsuperscript{\textregistered}$ code posted on the current paper's GitHub repository \cite{NSS8_companion} computes the bifurcation diagrams showing all of the solutions to Equation~\eqref{eqn:original} that are within the strip $s_\text{min} \leq s \leq s_\text{max}$ and connected
by a series of BLIS bifurcations to the starting solution, usually $x = 0$, $s = s_\text{min}$.
The input parameters are changed by editing the MATLAB$\textsuperscript{\textregistered}$ code.

Branch following is achieved by employing a new version of the so-called 
tGNGA (tangent gradient Newton Galerkin Algorithm) described in \cite{NSS5}.
The added feature is that the computations are done within the invariant subspace. 
That is, we solve Equation~\eqref{FBofw} (with $H = -1$) rather than Equation~\eqref{eqn:original}.  
This speeds up the computations greatly, especially in examples where $n$ is large and $d$ is small.

The input to the tGNGA is the invariant subspace $W_m$ of the branch, a single point $(\tilde s, \tilde x) \in \R \times W_m$, 
and a tangent vector to the branch in  $(\tilde s, \tilde x) \in \R \times W_m$.
Typically, the first branch has $\tilde x = 0$ and $W_m = \{0\}$.  After that,
the initial point is a bifurcation point on a previously computed branch.

The branch following algorithm needs to stop the branch at a bifurcation point when it is the daughter branch.
If the branch continues past the bifurcation point, then it can potentially continue past another bifurcation point
and cause an infinite loop in the MATLAB$\textsuperscript{\textregistered}$ program.

Detecting bifurcation points on a mother branch in $\R \times W_m$ is done using the BLIS.
The Morse Index, which is a non-negative integer, is an important feature in our 
GNGA \cite{NSS3, NSS5, NS}.  It is replaced by a vector of non-negative integers in our new code.
Let $\mathcal D $ be the set of $F$-invariant subspaces of System~\eqref{eqn:original} that contain $W_m$.  As described in \cite{NiSS}, $\mathcal D$ is different depending 
on whether or not $f$ is odd.
These are the possible daughter subspaces.
At each computed point on the mother branch, 
the eigenvalues of the Jacobian, restricted to each $W \in \mathcal D$, are computed,
and a list of the number of eigenvalues with negative real part is recorded.
This is the signature list.
Recall that the matrix $M$ in Equation~\eqref{eqn:original} need not be symmetric, 
so the eigenvalues of the Jacobian are not necessarily real.
If any of the numbers on the signature list changes between two points, the secant method is
used to find the point where an eigenvalue has zero real part.
At a fold point, the signature of the Jacobian, restricted to $W_m$, changes by 1 and this is not
a bifurcation point.
If any other number on the signature list changes by 1, the point where the corresponding
eigenvalue is 0 is a BLIS bifurcation point, and the corresponding critical eigenvector is computed.  

The invariant subspace of the daughter branch is
the smallest $W \in \mathcal D$ that contains the critical eigenvector.
Care must be taken to follow only one daughter branch if several of them
are related by the $\aut(G) \times \Z_2$ symmetry.
Then the bifurcation point, the invariant subspace, and the critical eigenvector are put into a 
queue to start the daughter branch with the tGNGA.
We do not check Condition~(b) in Theorem~\ref{thm:BLIS}.  
This condition is generically true, and 
it is observed to be true in all of the bifurcations we have computed. 
In this way, we follow one branch in each group orbit of branches that are connected to the first branch
(usually the trivial branch) by a sequence of BLIS bifurcations.
%
%

\section{Conclusion}
We have presented the Bifurcation Lemma for Invariant Subspaces (BLIS), which describes most generic
bifurcations of solutions to $F(s, x) = 0$, for $F: \R \times X \to X$.  
The space $X$ can be the Euclidean space $\R^n$, or more generally a Banach space.  Thus, the
theory applies to PDE.
The theory of Bifurcation from Simple Eigenvalues (BSE)
describes the simplest case.
The BLIS is BSE applied to nested invariant subspaces of $F$.  
The BLIS allows the analysis of some bifurcations with multiple 0 eigenvalues.
The Equivariant Branching Lemma (EBL) has been a powerful tool for describing bifurcations
where high multiplicity of a zero eigenvalue is caused by the symmetry of the system.
The EBL is a special case of the BLIS.
Many coupled cell networks have invariant subspaces beyond those caused by symmetry,
and the BLIS can describe bifurcations in such systems.
We have written a freely available MATLAB$\textsuperscript{\textregistered}$ program that computes the solution branches
in a class of coupled cell networks.

While the BLIS does not describe all steady state bifurcations, in many examples most or all of the bifurcating branches are predicted by this simple theorem which is easy to apply.
\bibliographystyle{plain} 
\bibliography{nss8}

\end{document}